\setlist[enumerate]{leftmargin=.8in}
\setlist[itemize]{leftmargin=.8in}
\DeclareMathOperator{\conv}{conv}
\newcommand{\C}{\mathbb{C}}
\newcommand{\R}{\mathbb{R}}
\newcommand{\Z}{\mathbb{Z}}
\newcommand{\imag}{\mathbf{i}}
\newcommand{\edges}{\mathcal{E}}
\newcommand{\boldzero}{\mathbf{0}}
\newcommand{\boldx}{\mathbf{x}}
\newcommand{\boldy}{\mathbf{y}}
\newcommand{\boldv}{\mathbf{v}}
\newcommand{\bolde}{\mathbf{e}}
\newcommand{\boldlambda}{\boldsymbol{\lambda}}
\newcommand{\apbound}{\ensuremath{N \binom{N-1}{\lfloor (N-1) / 2 \rfloor}}}
\title{%
    A toric deformation method for solving Kuramoto equations on cycle networks
    \thanks{
        \funding{This work was funded, in part, by the AMS Simons Travel Grant,
        Auburn University at Montgomery Research Grant-in-Aid Program,
        and NSF under award number DMS-1923099 and DMS-1922998.}
    }
}
\author{Tianran Chen%
    \thanks{Department of Mathematics,
        Auburn University Montgomery, Montgomery, AL USA
        (\email{ti@nranchen.org}, \url{http://www.tianranchen.org/}).
    }
    \and Robert Davis%
    \thanks{Department of Mathematics, 
        Colgate University,
        Hamilton, NY, USA
        (\email{rdavis@colgate.edu}).
    }
}
\begin{document}

\maketitle

\begin{abstract}
    The study of frequency synchronization configurations
    in Kuramoto models is a ubiquitous mathematical problem 
    that has found applications in many seemingly independent fields.
    In this paper, we focus on networks whose underlying graph are cycle graphs.
    Based on the recent result on the upper bound of the 
    frequency synchronization configurations in this context,
    we propose a toric deformation homotopy method for locating all
    frequency synchronization configurations with complexity that is linear
    in this upper bound.
    Basing on the polyhedral homotopy method, this homotopy induces 
    a deformation of the set of the synchronization configurations into 
    a collection of toric varieties,
    yet our method has the distinct advantages of 
    avoiding the costly step of computing mixed cells
    and using special starting systems that can be solved in linear time.
    We also explore the important consequences of this homotopy method
    in the context of directed acyclic decomposition of Kuramoto networks
    and tropical stable intersection points for Kuramoto equations.
\end{abstract}

\begin{keywords}
    Toric varieties, Kuramoto model, adjacency polytope, polyhedral homotopy, tropical geometry
\end{keywords}

\begin{AMS}
    14Q99, 
    14T05, 
    52B20, 
    65H10, 
    65H20, 
    92B25, 
\end{AMS}

\section{Introduction}
A network of oscillators is a set of objects, varying between two states, 
that can influence one another.
A network of $N=n+1$ oscillators can be modeled
by a weighted graph $G = (V,E,K)$ with
vertices $V = \{0,\dots,n\}$ representing the oscillators,
edges $E$ representing the connections among the oscillators, 
and weights $K = \{k_{ij}\}$ representing the \emph{coupling strength} along the edges.
Each oscillator $i$ has its natural frequency $\omega_i$
so that $\frac{d\theta_i}{dt} = \omega_i$ 
if the oscillator is disconnected from the network.
In a network, however, oscillators influence one another and the dynamics
can be described by the generalized Kuramoto model~\cite{Kuramoto1975}
with phase shift given by the differential equations
\begin{equation}
    \frac{d \theta_i}{dt} =
    \omega_i -
    \sum_{j \in \mathcal{N}_G(i)} k_{ij} \sin(\theta_{i}-\theta_{j} + \delta_{ij}),
    \quad \text{ for } i = 0,\dots,n,
    \label{equ:kuramoto-ode}
\end{equation}
where each $\theta_i \in [0,2\pi)$ is the phase angle that describes the status
of the $i$-th oscillator,
$\mathcal{N}_G(i)$ is the set of its neighbors,
and the constant $\delta_{ij}$ quantifies the phase shift between oscillators $i$ and $j$.
Here we allow \emph{non-uniform coupling strength}
($k_{ij}$'s may not be identical)
and potential \emph{phase shift} ($\delta_{ij}$'s may be nonzero).
A fundamental mathematical problem in the study of Kuramoto model
as well as the behavior of coupled oscillators is the occurrence of
\emph{synchronization}.
Among many different notions of synchronization,
this paper focuses only on \emph{frequency synchronization}, 
which occurs when the competing forces of 
oscillators to stay with their natural frequency 
and the influence of their neighbors reach equilibrium for all oscillators 
and they are all tuned to the same frequency
and hence $\frac{d\theta_i}{dt} = c$ for a common constant $c$ for all $i$.
They are precisely the solutions to the system of nonlinear equations
\begin{equation}\label{equ:kuramoto-sin}
    \omega_i - 
    \sum_{j \in \mathcal{N}_G(i)} k_{ij} 
    \sin(\theta_{i}-\theta_{j} + \delta_{ij}) = c,
    \quad \text{ for } i = 0,\dots,n.
\end{equation}

In this paper, we focus on the cases where the underlying graph is a cycle,
i.e., the set of edges $E$ of $G$ consists of
$\{0,1\}, \{1,2\}, \dots, \{n-1,n\}, \{n,0\}$.
In~\cite{ChenDavisMehta2018}, an upper bound on the total number of 
isolated solutions the \emph{synchronization equations}~\eqref{equ:kuramoto-sin} 
has is shown to be $\apbound$
using the theory of birationally invariant intersection index.
Indeed, this upper bound is generically sharp for a complexified version of~\eqref{equ:kuramoto-sin}.
It is then natural to ask if there exists an algorithm that can locate 
all solutions of~\eqref{equ:kuramoto-sin} with a complexity that is
linear in this solution bound $\apbound$.
This is the main topic that this paper addresses.

The primary contribution of this paper is the development of a homotopy method
in the spirit of polyhedral homotopy that will find \emph{all} 
isolated solutions of~\eqref{equ:kuramoto-sin}.
The total number of homotopy paths to be tracked with this method
is exactly the solution bound $\apbound$.
Yet, this method offers significant advantages over a direct application of
polyhedral homotopy via the following features:
\begin{itemize}
    \item 
        our method avoids the costly step of computing mixed volume/cells;
        
    \item
        our method does not require solving binomial systems,
        and the starting systems can be solved in $O(N^2)$ time in serial or
        $O(N)$ time in parallel;
    \item 
        our method uses integer liftings of $\{0,1,2\}$ and hence avoids the well known
        numerical instability caused by random liftings.
\end{itemize}

The secondary contribution is an explicit description of a regular
unimodular triangulation of the adjacency polytope which significantly
strengthens the previous volume and facet description results~\cite{ChenDavisMehta2018}
and may shed new light on closely related constructions 
such as ``symmetric edge polytopes''~\cite{DelucchiHoessly2016Fundamental,SymmetricEdge2019,InterlacingEhrhart,SymmetricConfigurations,SmoothFanoEhrhart}.

The tertiary contribution is our significant refinement for the 
direct acyclic decomposition scheme proposed in~\cite{Chen2019Directed} 
for cycle graphs.
This refined scheme is capable of reducing a network into simplest subnetworks
known as primitive subnetworks for which frequency synchronization configurations
can be computed directly and efficiently.
Finally, we provide an interpretation of our result in terms of
tropical algebraic geometry as well as the equivalence of three
rather different perspective to the Kuramoto equations.

The paper is organized as follows. 
In~\cref{sec:kuramoto} we briefly review the Kuramoto model and Kuramoto equations.
\Cref{sec:formulation} describes a complex algebraic formulation of the Kuramoto equations
as a system of rational equations over the complex algebraic torus $(\C^*)^n$.
Recent results on the generic root count of the algebraic Kuramoto equations,
known as the adjacency polytope bound, is reviewed in~\cref{sec:ap}.
We strengthen this result by describing explicit formula
for a regular unimodular triangulation of the adjacency polytope
in~\cref{sec:triangulation}.
The resulting algorithm is outlined in~\cref{sec:algo},
and \cref{sec:implementation} describe the software implementation.
Basing on this triangulation, 
we develop our homotopy method in~\cref{sec:homotopy}.
In~\cref{sec:interpretation}
we interpret our results in the broader context,
and the conclusion follows in~\cref{sec:conclusion}.

\section{Kuramoto model and Kuramoto equations}\label{sec:kuramoto}

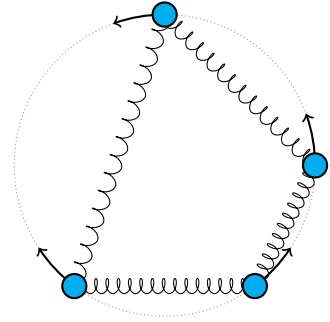
\begin{wrapfigure}[11]{r}{0.30\textwidth}
    \centering
    \begin{tikzpicture}[scale=0.4]
        \draw[densely dotted,color=gray] (0,0) circle(5.0);
        \draw[decoration={segment length=2.0mm,amplitude=1mm,coil},decorate] (5,0) -- (0,5);
        \draw[decoration={segment length=2.5mm,amplitude=1mm,coil},decorate] (0,5) -- (-3,-4);
        \draw[decoration={segment length=1.5mm,amplitude=1mm,coil},decorate] (5,0) -- (3,-4);
        \draw[decoration={segment length=1.5mm,amplitude=1mm,coil},decorate] (3,-4) -- (-3,-4);
        \draw[->,thick,color=black] ( 5, 0) arc[radius=5, start angle=0  , end angle=20];
        \draw[->,thick,color=black] ( 0, 5) arc[radius=5, start angle=90 , end angle=110];
        \draw[->,thick,color=black] (-3,-4) arc[radius=5, start angle=233, end angle=213];
        \draw[->,thick,color=black] ( 3,-4) arc[radius=5, start angle=307, end angle=327];
        \filldraw[thick,fill=cyan]  ( 5, 0) circle(0.4); 
        \filldraw[thick,fill=cyan]  ( 0, 5) circle(0.4); 
        \filldraw[thick,fill=cyan]  ( 3,-4) circle(0.4); 
        \filldraw[thick,fill=cyan]  (-3,-4) circle(0.4); 
    \end{tikzpicture}
    \caption{A spring network}\label{fig:spring-network}
\end{wrapfigure}
A simple mechanical analog of the coupled oscillator model~\eqref{equ:kuramoto-ode}
is a spring network, shown in~\cref{fig:spring-network}, that consists of
a set of weightless particles constrained to move on the unit circle without
friction or collision~\cite{dorfler_synchronization_2014,Kuramoto1975,Kuramoto2012}.
Real numbers $k_{ij} = k_{ji}$ characterizing the stiffness of the springs
connecting particles $i$ and $j$ are known as \emph{coupling strength},
and $\frac{d\theta_i}{dt}$ represents the angular velocity of the $i$-th particle.
In some applications, oscillators may be influenced by shifted phases of neighboring oscillators,
and such phase shift are characterized by $\{ \delta_{ij} \}$.

An important class of special configurations in which the angular velocity of
\emph{all} particles can become perfectly aligned are known as
\emph{frequency synchronization}.
That is, $\frac{d\theta_i}{dt} = c$ for $i=0,\dots,n$ and a constant $c$.
By adopting a rotational frame of reference, we can always assume $c = 0$.
That is, frequency synchronization configurations are equivalent to equilibria
of the ordinary differential equations~\eqref{equ:kuramoto-ode}.
As a result of the symmetry assumption that $k_{ij} = k_{ji}$, 
the $n+1$ equilibrium equations must sum to zero.
This allows the elimination of one of the equations,
producing the system of $n$ equations in $n$ unknowns
\begin{equation*}
    \omega_i - 
    \sum_{j \in \mathcal{N}_G(i)} k_{ij} 
    \sin(\theta_{i} - \theta_{j} + \delta_{ij}) = c,
    \quad \text{ for } i = 1,\dots,n.
\end{equation*}
Despite its mechanical origin, the above frequency synchronization system
naturally appears in a long list of seemingly unrelated fields,
including chemistry, electrical engineering, biology, and computer security.
We refer to~\cite{dorfler_synchronization_2014} for a detailed list.
The problem of find some or all frequency synchronization configurations
has been an active research topic in the recent decades
\cite{Casetti:June2003:0022-4715:1091,delabays2016multistability,delabays2017multistability,Hughes:2012hg,hughes2014inversion,kastner2011stationary,manik2016cycle,Mehta2015Algebraic,Mehta:2013iea,mehta2011stationary,Nerattini:2012pi,ochab2010synchronization,xi2017synchronization,xin2016analytical}

\section{Complex algebraic formulation of Kuramoto equations}\label{sec:formulation}

With proper complexification, the Kuramoto equations~\eqref{equ:kuramoto-sin}  can be 
reformulated as a system of Laurent polynomials over the algebraic torus $(\C^*)^N$.
This form is crucial in applying the root counting and homotopy continuation theory.
Using the identity
$\sin(\theta_{i} - \theta_{j} + \delta_{ij}) = \frac{1}{2\imag}
(e^{ \imag(\theta_{i} - \theta_{j} + \delta_{ij})} - e^{-\imag(\theta_{i} - \theta_{j} + \delta_{ij}}))$
where $\imag = \sqrt{-1}$,~\eqref{equ:kuramoto-sin} can be transformed into
\begin{equation*}
    c_{i} -
    \sum_{j \in \mathcal{N}_G(i)}
    \frac{k_{ij}}{2\imag} (
        e^{ \imag \theta_{i}} e^{-\imag \theta_{j}} e^{ \imag \delta_{ij}} -
        e^{-\imag \theta_{i}} e^{ \imag \theta_{j}} e^{-\imag \delta_{ij}} 
    ) = 0
    \quad \text{ for } i = 1,\dots,N.
\end{equation*}
With the substitution $x_{i} := e^{\imag \theta_{i}}$ for $i = 1,\dots,n$,
we obtain the system of rational equations
\begin{equation}\label{equ:kuramoto-rat}
    F_{G,i}(x_1,\dots,x_n) = c_{i} - \sum_{j \in \mathcal{N}_G(i)} 
        a_{ij} \frac{x_i}{x_j} - 
        b_{ij} \frac{x_j}{x_i}
    = 0
    \quad \text{ for } i = 1,\dots,n
\end{equation}
where 
$a_{ij} = \frac{k_{ij}}{2\imag} e^{ \imag \delta_{ij}}$,
$b_{ij} = \frac{k_{ij}}{2\imag} e^{-\imag \delta_{ij}}$,
and $x_0 = 1$.
$F_G = (F_{G,1},\dots,F_{G,n})$ is a system of $n$ Laurent polynomial
equations  in the $n$ complex variables $\mathbf{x} = (x_1,\dots,x_n)$.
In the following, it is referred to as the (algebraic) system of
\emph{synchronization equations} for a Kuramoto model, 
or simply a \emph{synchronization system}.
Since $x_i$'s appear in the denominators,
$F_G$ is only defined on the algebraic torus ${(\C^*)}^n = {(\C \setminus \{0\})}^n$.
Each equivalence class of real solutions of~\eqref{equ:kuramoto-sin}
(modulo translations by multiples of $2\pi$)
corresponds to a single solution of~\eqref{equ:kuramoto-rat} in ${(\C^*)}^n$.

If we consider $F_G$ as a column vector, then for any nonsingular  $n \times n$ matrix $R$, 
the systems $R \cdot F_{G}$ and $F_{G}$ 
have the same zero set.
Therefore in the following we focus on the system
\begin{equation*}
    F_G^R = R \cdot F_G =
    \begin{bmatrix}
        r_{11} & \cdots & r_{1n} \\
        \vdots & \ddots & \vdots \\
        r_{n1} & \cdots & r_{nn}
    \end{bmatrix}
    \begin{bmatrix}
        F_{G,1} \\
        \vdots \\
        F_{G,n}
    \end{bmatrix}
\end{equation*}
It is easy to verify that for generic choices of the matrix $R$,
there is no complete cancellation of the terms, 
and thus $F_G^R$ is an unmixed system of the form
\begin{equation}\label{equ:kuramoto-rand}
    F_{G,k}^R = c_k^R -
    \sum_{\{i,j\} \in \edges(G)}
        a_{ijk}^R \frac{x_i}{x_j} + 
        a_{jik}^R \frac{x_j}{x_i}
        \quad \text{ for } k = 1,\dots,n
\end{equation}
where $c_k^R$ and $a_{ijk}^R$ are the resulting nonzero coefficients
after collection of similar terms.
This system will be referred to as the \emph{unmixed form}
of the synchronization equations,
and it will be the main focus of the rest of this paper.

\section{Maximum and Generic Root Count}\label{sec:ap}

In this section we briefly review the existing results on the generic root count
of~\eqref{equ:kuramoto-sin}, \eqref{equ:kuramoto-rat}, and~\eqref{equ:kuramoto-rand}.

In~\cite{Baillieul1982}, an upper bound on the number of equilibria of the
Kuramoto model (solutions to~\eqref{equ:kuramoto-sin}) induced by a graph of
$N$ vertices with any coupling strengths is shown to be $\binom{2N-2}{N-1}$.
This bound can be understood as a bi-homogeneous B\'{e}zout number on the
algebraic version~\eqref{equ:kuramoto-rat} or~\eqref{equ:kuramoto-rand}:
Via the map $y_i = x_i^{-1}$, the two systems can be translated into equivalent systems 
that have a bi-degree of $(1,1)$ with respect to the partition 
and are defined in $((x_1,\dots,x_n),(y_1,\dots,y_n))$ 
with the additional conditions that $x_i y_i = 1$ for $i=1,\dots,n$.
It is easy to verify that the bi-homogeneous B\'{e}zout number will be
$\binom{2n}{n} = \binom{2(N-1)}{N-1}$.

Recent studies~\cite{ChenMehtaNiemerg2016} suggest tighter bounds on the number of 
isolated complex solutions may exist when the network is sparsely connected.
When the underlying graph is a cycle,
a sharp bound is established in~\cite{ChenDavisMehta2018} using the
theory of the birationally invariant intersection index as well as a construction
known as the \emph{adjacency polytope} which we shall review briefly here.

A polytope is a bounded intersection of finitely many closed half-spaces.
The adjacency polytope is a polytope that encodes the 
topological information of the Kuramoto network.
Given an undirected graph $G$ with edge set $\mathcal{E}(G)$, its \emph{adjacency polytope} is defined to be
\begin{equation}\label{equ:ap}
    P_{G} = \conv \left\{ 
                \mathbf{e}_i - \mathbf{e}_j 
                \mid 
                \{i,j\} \in \mathcal{E}(G) 
            \right\}
\end{equation}
where we adopt the convention that $\bolde_0 = \bolde_{n+1} = \boldzero$.
That is, the adjacency polytope of $G$ is the convex hull of a set of line segments, each corresponding to an edge in $G$.
$P_G$ is a \emph{lattice polytope} in the sense that all its vertices
have integer coordinates.
Similar constructions have also appeared in other contexts
(e.g. \cite{DelucchiHoessly2016Fundamental,Higashitani2016Interlacing,Matsui2011Roots}).

The \emph{adjacency polytope bound}~\cite{Chen2017Unmixing} 
of a Kuramoto system~\eqref{equ:kuramoto-sin} on the graph $G$ is defined to be 
$n! \operatorname{vol} (P_G)$, the \emph{normalized volume} of $P_G$.
This bound is an upper bound for the number of isolated complex solutions
for the systems~\eqref{equ:kuramoto-rand} and~\eqref{equ:kuramoto-rat}.
Consequently, it is also an upper bound for the number of real solutions that
the original synchronization system~\eqref{equ:kuramoto-sin} has.

In the case of a cycle graph of $N$ nodes, i.e., 
\[G = C_N = (\{0,\dots,N-1\},\{\{0,1\},\dots,\{N-2,N-1\},\{N-1,0\}\}),\]
the recent paper~\cite{ChenDavisMehta2018} established the explicit formula 
$\apbound$ for the adjacency polytope bound.
Furthermore it is shown that this bound coincides with the 
birationally invariant intersection index in $(\C^*)^n$ of the
Kuramoto system~\eqref{equ:kuramoto-rat} as a member of a
family of rational functions.
In this paper, we strengthen this result by producing an explicit
construction of a unimodular triangulation of the adjacency polytope
for cycle graphs and define a homotopy method base on this triangulation.

Before continuing, two remarks are in order.

\begin{remark}\label{rmk:generic}
    The theory of birationally invariant intersection index~\cite{KavehKhovanskii2012Newton,KavehKhovanskii2010Mixed}
    (as well as the general intersection theory~\cite{fulton_introduction_1993} 
    and homotopy continuation theory~\cite{Li2003Numerical,Sommese2005})
    shows that the adjacency polytope bound is ``generically exact''
    in the sense that if one chooses the coefficients of the 
    algebraic Kuramoto equations~\eqref{equ:kuramoto-rat} randomly
    then, with probably one, the total number of isolated complex solutions
    that system has is exactly the adjacency polytope bound $\apbound$.
    Stated more precisely, there exists a nonzero polynomial $D$ 
    whose variables are the coefficients $\{\omega_i\}$ and $\{ a_{ij} \}$ 
    of~\eqref{equ:kuramoto-rand} such that
    for all choices of $\omega_i\}$ and $\{ a_{ij}' \}$ where  $D \neq 0$,
    the total number of isolated complex roots of~\eqref{equ:kuramoto-rat}
    reaches the adjacency polytope bound.
\end{remark}

\begin{remark}
    In specific cases, the adjacency polytope of a graph on $\{0,\dots,N-1\}$ coincides with the type $A_{N-1}$ \emph{root polytope} as defined in \cite{ArdilaEtAlRoots}. This polytope is defined as the convex hull of the generators in $\Z^N$ of the root lattice, generated as a monoid, of the Coxeter group of type $A_{N-1}$.
    Indeed, this is exactly the adjacency polytope $P_G$ where $G$ is the graph for which $0$ is an isolated vertex and the induced subgraph on $\{1,\dots,N-1\}$ is $K_{N-1}$.
    However, in the $C_{N-1}$ and $D_{N-1}$ cases, the constructions of the root polytopes do not coindicde with any adjacency polytopes. 
    
    One should take care when researching root polytopes in the literature, as there are competing notions of root polytopes for root lattices of type $A_{N-1}$.
    One is as we have mentioned, while another considers only \emph{positive} roots of the root lattice (and the origin). 
    This root polytope was introduced and studied in \cite[Section 12]{Postnikov}, with an emphasis on connections to the broad class of polytopes called \emph{generalized permutohedra}.
\end{remark}
\section{A Regular, Unimodular Triangulation of the Adjacency Polytope}\label{sec:triangulation}

A \emph{subdivision} of an $n$-dimensional polytope $P$ is a collection of polytopes $P_1,\dots,P_k \subseteq P$
such that 
\begin{enumerate}
    \item $\dim P_i = n$ for all $i$,
    \item $P_i \cap P_j$ is either empty 
or a face common to both $P_i$ and $P_j$, and
    \item $P = \cup_i P_i$.
\end{enumerate}
A \emph{triangulation}, a.k.a. \emph{simplicial subdivision} 
of a polytope is a subdivision consisting of simplices.
Furthermore, a triangulation is said to be \emph{unimodular}
if all the member simplices are lattice simplices of normalized volume 1.

In order to be used in our homotopy construction, 
the ``regularity'' property of the triangulation is also required.
A triangulation of a polytope is said to be \emph{regular} if
it is the projection of the lower facets of a lifting of the polytope
into one-higher dimension.
Stated more precisely, given a polytope $P = \conv \{ \boldv_1, \dots, \boldv_m \}$
in $\R^n$ and  and weights $\omega_1, \dots, \omega_m \in \R$, the new polytope
\[
    P' = \conv \{ (\boldv_i, \omega_i) \in \R^{n+1} \mid i = 1,\dots,m \}
\]
is a lifting of $P$ into one-higher dimension.
The projections of \emph{lower facets}, that is, the facets whose inner normal vectors have positive last entry,
to the first $n$ coordinates is called a \emph{regular subdivision},
or a \emph{regular triangulation} if all facets are simplices.

For the cycle graph $C_N$ on $N = n+1$ nodes, 
we will construct a unimodular triangulation for the adjacency polytope 
$P_{C_N}$ by finding and triangulating all of its \emph{facets}: the faces of codimension $1$.
Using the set of facets $\mathcal{F}(P_{C_N})$,
a well known subdivision of $P_{C_N}$ can be constructed as the set of pyramids 
formed by the facets and a fixed interior point as the common apex.
That is, fixing any interior point $\mathbf{p} \in P_{C_N}$,
the set
\begin{equation*}
    \{
        \conv F \cup \{\mathbf{p}\} 
        \mid F \in \mathcal{F}(P_{C_N})
    \}
\end{equation*}
forms a subdivision of $P_{C_N}$.
By further triangulating each facet, 
the above subdivision can be refined into a triangulation of $P_{C_N}$.
That is, if $T(F)$ is a triangulation of the facet $F \in \mathcal{F}(P_{C_N})$
then the set
\begin{equation*}
    \{
        \conv C \cup \{\mathbf{p}\} 
        \mid C \in T(F), F \in \mathcal{F}(P_{C_N})
    \}
\end{equation*}
for a fixed interior point $\mathbf{p}$ form a triangulation of $P_{C_N}$.
This is the strategy that we will follow in this section.
The choice of the interior point $\mathbf{p}$ will be the origin $\boldzero$ 
which is an interior point of $P_{C_N}$ since it is the average of
$\bolde_i - \bolde_j$ and $\bolde_j - \bolde_i$ for all edges $\{i,j\}$.

It was shown in \cite{ChenDavisMehta2018} that $P_{C_N}$ is 
unimodularly equivalent to the polytope
\[
    Q_N = \conv\{
        \pm \bolde_1,
        \dots,
        \pm \bolde_n,
        \pm(\bolde_1 + \cdots + \bolde_n)
    \}
\]
via the map $x \mapsto Ax$, 
where $A$ is the $n\times n$ matrix with $1$ on and below the diagonal 
and $0$ everywhere else.
Then \cite[Proposition 12]{ChenDavisMehta2018} and \cite[Remark 4.3]{Nill} identify the facets of $Q_N$.
The geometric structure of this polytope depends on the parity of $N$.
When $N$ is even, the facets can be indexed by the set
\[
    \Lambda_N = \left\{(\lambda_1,\dots,\lambda_N) \in \{-1,1\}^N \mid \sum_{i=1}^N \lambda_i = 0 \right\}
\]
and are of the form 
\[
    F_{\boldlambda} = \conv\{
        \lambda_1 ( -\bolde_1-\bolde_2-\cdots - \bolde_n), 
        \lambda_2 \bolde_1,\dots,\lambda_N \bolde_n, 
        \;\mid\; 
        \boldlambda = (\lambda_1,\dots,\lambda_N) \in \Lambda_N
    \}.
\]
When $N$ is odd, we define $\Lambda_N$ differently: 
in this case, the facets can be indexed by 
$\Lambda_N := \cup_{j=1}^N \Lambda_{j,N}$ where
\[
    \Lambda_{j,N} = \left\{
        (\lambda_1,\dots,\lambda_N) \;\mid\; 
        \lambda_j = 0,\, \lambda_i \in \{-1,1\} 
        \;\text{ for all } i \neq j, \text{ and } 
        \sum_{i=1}^N \lambda_i = 0 \right
    \},
\]
and the facet corresponding to $\boldlambda = (\lambda_1,\dots,\lambda_N) \in \Lambda_{j,N}$
is given by
\[
    F_{\boldlambda} = \conv\{
        \lambda_1(-\bolde_1 - \bolde_2-\cdots - \bolde_n), 
        \lambda_2 \bolde_1,\dots,
        \widehat{\lambda_j \bolde_{j-1}},
        \dots,
        \lambda_N \bolde_n
        \}.
\]
Here, the notation $\widehat{\lambda_j \bolde_{j-1}}$ indicates that element is removed from the list.

From the above constructions, we can see that $Q_N$ is \emph{simplicial}
(i.e., all the facets are simplices) when $N$ is odd, 
but is not simplicial when $N$ is even. 
Via the unimodular equivalence between $Q_N$ and $P_{C_N}$
we have same characterization of the facets of $P_{C_N}$.
As a result of this dichotomy,
the construction of the triangulation in the even and odd $N$ cases
require very different procedures.

\begin{remark}[Unimodular equivalence of facets]\label{rmk:unimod-equiv}
Another important property worth noting is that the facets of $Q_N$ 
are all unimodularly equivalent to each other. 
To see this suppose $F_{\boldlambda}, F_{\boldlambda'}$ are facets of $Q_N$. 
Then, $F_{\boldlambda'} = f(F_{\boldlambda})$ where $f(x) = B_{\boldlambda,\boldlambda'} x$ and 
$B_{\boldlambda,\boldlambda'}$ is the $n \times n$ matrix constructed as follows:
first let $\ell = \lambda_1 \lambda'_1$.
For $1 \leq i \leq n$, note that there is a unique $j$ such that $\lambda_{i+1}$ is the $j^{th}$ instance of $-1$ or the $j^{th}$ instance of $1$ in $(\lambda_2,\dots,\lambda_N)$.
Let $\ell\lambda'_{k+1}$ be the $j^{th}$ instance of $\ell\lambda_{i+1}$ in $(\ell\lambda'_2,\dots,\ell\lambda'_N)$.
Set row $i$ of $B_{\boldlambda,\boldlambda'}$ to be $\ell e_k$.
As a result, $B_{\boldlambda,\boldlambda'}$ is a permutation matrix (up to simultaneous scaling of all entries by $-1$), so $\det B_{\lambda,\lambda'} = \pm 1$, hence $f$ yields a unimodular equivalence.

    Consider, for example, a case with $N=3$ and the choices of
    $\boldlambda = (1,-1,-1,1)$ and $\boldlambda' = (-1,1,-1,1)$.
    In this case, $\ell = \lambda_1 \lambda_1' = -1$.
    Note that $\lambda_2$ is the first instance of $-1$ in $(\lambda_2,\lambda_3,\lambda_4)$.
    Now, $-\lambda_2'$ is the first occurrence of $-\lambda_2$ in $(-\lambda'_2,-\lambda'_3,-\lambda'_4)$.
    So, the first row of $B_{\boldlambda,\boldlambda'}$ is $-\bolde_1$.
    Next, $\lambda_3$ is the second occurrence of $-1$ in $(\lambda_2,\lambda_3,\lambda_4)$, 
    and $-\lambda'_4$ is the second occurrence of $-1$ in $(-\lambda'_2,-\lambda'_3,-\lambda'_4)$, 
    so the second row of $B_{\boldlambda,\boldlambda'}$ is $-\bolde_3$.
    Since $\lambda_3$ is the first occurrence of $1$ in $(\lambda_2,\lambda_3,\lambda_4)$, 
    and $-\lambda'_3$ is the first occurrence of $1$ in $(-\lambda'_2,-\lambda'_3,-\lambda'_4)$, 
    we have that the third row of $B_{\lambda,\lambda'}$ is $-\bolde_2$:
    \[
        B_{\boldlambda,\boldlambda'} =
        \begin{bmatrix}
            -1 & 0 & 0 \\
            0 & 0 & -1 \\ 
            0 & -1 & 0 
        \end{bmatrix}.
    \]
\end{remark}

\begin{remark}[Point configuration]\label{rmk:pt-conf}
    In order to be used in a homotopy construction,
    a stronger triangulation is needed.
    Define the point set
    \begin{equation*}
        S_{C_N} = \{ \boldzero \} \cup  \{ \bolde_i - \bolde_j \mid \{i,j\} \in \mathcal{E}(C_N) \}
    \end{equation*}
    This set is known as the \emph{support} of the unmixed system~\eqref{equ:kuramoto-rand}
    as it collects the exponents (as points) of all the terms appearing in that system.
    It is easy to see that $P_{C_N} = \conv S_{C_N}$
    since $\boldzero$ is an interior point of $P_{C_N}$
    (as $\boldzero = \frac{1}{2} (\bolde_i - \bolde_j) + \frac{1}{2} (\bolde_j - \bolde_i)$).
    In our constructions, we will require all simplices in a triangulation
    to have vertices within the set $S_{C_N}$.
    This is known as a triangulation of a \emph{point configuration}.
\end{remark}

In the rest of this section, we describe the construction of 
regular unimodular triangulation of $P_{C_N}$ in the cases with
even and odd $N$ respectively.

\subsection{Even $N$}

For the entirety of this subsection, we assume that $N$ is even.
From the preceding discussion, 
we know that all of the facets of $P_{C_N}$ are unimodularly equivalent 
due to transitivity of equivalence relations.
In particular, all facets of $P_{C_N}$ are unimodularly equivalent to
\[
    \conv \{
        \bolde_0-\bolde_1,
        \bolde_1-\bolde_2,\dots,
        \bolde_{\lfloor \frac{n}{2} \rfloor}-\bolde_{\lfloor \frac{n}{2} \rfloor+1}, 
        -(\bolde_{\lfloor \frac{n}{2} \rfloor+1}-\bolde_{\lfloor \frac{n}{2} \rfloor+2}),\dots,-(\bolde_{n-1}-\bolde_n),-\bolde_n
    \}.
\]
Let $G_{\boldlambda}$ denote the facet of $P_{C_N}$ obtained by applying 
$A^{-1}$ to all points in $F_{\boldlambda}$.
It will be important to keep in mind that 
$\pm A^{-1}(\bolde_1 + \cdots + \bolde_n) = \pm \bolde_1$.
We can then produce a subdivision of $P_{C_N}$ by setting
\[
    G^{\mathbf{0}}_{\boldlambda} = \conv\{\mathbf{0},G_{\boldlambda}\}.
\]
and ranging over all $\boldlambda \in \Lambda_N$.
    
To aid us in what follows, we establish the following lemma.
Recall that in $\R^n$, we use the convention $\mathbf{e}_0 = \mathbf{e}_{n+1} = \mathbf{0}$.

\begin{lemma}\label{lem: two triangulations}
    Let $V_N = \{\boldv_0,\dots,\boldv_N\}$ denote the vertices of 
    $G^{\mathbf{0}}_{\boldlambda}$ such that
    \[
        \boldv_i = 
        \begin{cases}
            \boldzero & \text{ if } i = 0,\\
           \lambda_i(\bolde_{i-1} - \bolde_i) & \text{ if } 1 \leq i \leq N,
        \end{cases}
    \]
    When $N$ is even, each $G^{\mathbf{0}}_{\boldlambda}$ has exactly two triangulations:
    \[
        \Delta_+(G^{\mathbf{0}}_{\boldlambda}) = 
        \{\conv\{V_N \setminus \{\boldv_i\}\} \mid \lambda_i = \lambda_1 \}
    \]
    and
    \[
        \Delta_-(G^{\mathbf{0}}_{\boldlambda}) = 
        \{\conv\{V_N \setminus \{\boldv_i\}\} \mid \lambda_i = -\lambda_1 \}
    \]
    Moreover, both of these triangulations are regular.
\end{lemma}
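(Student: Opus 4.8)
The plan is to recognize the vertex set $V_N$ of $G^{\mathbf{0}}_{\boldlambda}$ as a point configuration of $N+1$ points affinely spanning $\R^n$ — that is, a configuration of $d+2$ points in dimension $d=n$ — and then to invoke the classical structure theory of such ``near‑simplex'' configurations: a collection of $d+2$ affinely spanning points, all of which are vertices of their convex hull, has a \emph{unique} affine circuit, and exactly two triangulations, both regular, obtained by deleting in turn each of the vertices on one side of the circuit's Radon partition. Thus the work splits into (i) confirming the dimension count, (ii) computing the circuit explicitly, and (iii) matching the resulting pair of triangulations with $\Delta_+(G^{\mathbf{0}}_{\boldlambda})$ and $\Delta_-(G^{\mathbf{0}}_{\boldlambda})$ and recording regularity.

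For steps (i) and (ii): since the hypotheses already give that the $N+1$ points $\boldv_0,\dots,\boldv_N$ are the vertices of the pyramid $G^{\mathbf{0}}_{\boldlambda}$, it suffices to check that $G^{\mathbf{0}}_{\boldlambda}$ is $n$-dimensional, for which I would exhibit the $n+1$ affinely independent points $\boldv_0=\boldzero,\boldv_1,\dots,\boldv_n$: affine independence of this set is equivalent to linear independence of $\boldv_1,\dots,\boldv_n$, and using $\bolde_0=\boldzero$ the vector $\boldv_i=\lambda_i(\bolde_{i-1}-\bolde_i)$ is supported on coordinates $i-1$ and $i$ (only coordinate $1$ when $i=1$), so the matrix with these rows is lower bidiagonal with diagonal $(-\lambda_1,\dots,-\lambda_n)\in\{\pm1\}^n$ and determinant $\pm1\neq0$. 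For the circuit, the key computation is $\sum_{i=1}^{N}\lambda_i\boldv_i=\sum_{i=1}^{N}\lambda_i^{2}(\bolde_{i-1}-\bolde_i)=\sum_{i=1}^{N}(\bolde_{i-1}-\bolde_i)=\bolde_0-\bolde_N=\boldzero$, a telescoping sum whose endpoints vanish by the conventions $\bolde_0=\boldzero$ and $\bolde_N=\bolde_{n+1}=\boldzero$. Combined with $\sum_{i=1}^{N}\lambda_i=0$ (the defining condition of $\Lambda_N$ for even $N$), this is an affine dependence among $\boldv_1,\dots,\boldv_N$ with positive part $\{\boldv_i:\lambda_i=1\}$ and negative part $\{\boldv_i:\lambda_i=-1\}$; since $V_N$ has $N+1$ points spanning an $n$-dimensional affine space, its space of affine dependences is one‑dimensional, so this is the only circuit, and in particular $\boldv_0=\boldzero$ never participates.

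For step (iii), I would apply the standard description of the two triangulations of a $(d+2)$-point configuration: the full-dimensional simplices with vertices in $V_N$ are exactly $\conv(V_N\setminus\{\boldv_j\})$ for $j$ in the circuit support $\{1,\dots,N\}$; two of them have overlapping interiors precisely when their omitted vertices lie on opposite sides of the Radon partition; and the only maximal pairwise-interior-disjoint families are $\{\conv(V_N\setminus\{\boldv_i\}):\lambda_i=1\}$ and $\{\conv(V_N\setminus\{\boldv_i\}):\lambda_i=-1\}$, each of which tiles $\conv(V_N)$. Since $\lambda_1\in\{\pm1\}$, one of these families is $\{\conv(V_N\setminus\{\boldv_i\}):\lambda_i=\lambda_1\}=\Delta_+(G^{\mathbf{0}}_{\boldlambda})$ and the other is $\{\conv(V_N\setminus\{\boldv_i\}):\lambda_i=-\lambda_1\}=\Delta_-(G^{\mathbf{0}}_{\boldlambda})$, which gives both the ``exactly two'' claim and the explicit formulas. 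Regularity is then automatic: a $(d+2)$-point configuration has a one‑dimensional secondary polytope (a segment), whose two vertices correspond to two regular triangulations, and since there are only two triangulations in all, both are regular; alternatively, one verifies directly that assigning weight $1$ to the $\boldv_i$ on one side of the partition and weight $0$ to every other point of $V_N$ is a lifting whose lower facets project onto the corresponding triangulation.

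The main obstacle I anticipate is not any single calculation but making the appeal to $(d+2)$-point triangulation theory airtight: if a self-contained treatment is preferred, one must justify that each half of the balanced circuit genuinely triangulates $\conv(V_N)$ (the one substantive ingredient of that classical fact) and relate the lifting notion of ``regular'' used in this paper to the two vertices of the secondary polytope. Everything else is routine, the only delicate point being consistent use of the conventions $\bolde_0=\bolde_N=\boldzero$ in the telescoping sum and in the dimension count that forces uniqueness of the circuit.
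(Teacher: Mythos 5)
Your proposal is correct and follows essentially the same route as the paper: both identify the unique affine circuit on $V_N$ (the paper takes $c_0=0$ and $c_i=\lambda_i k/N$, a rescaling of your $c_i=\lambda_i$ from the telescoping sum) and then invoke the classical result that a configuration of $d+2$ affinely spanning points has exactly two triangulations, both regular (the paper cites Lemma~2.4.2 of the triangulations book of De Loera--Rambau--Santos), with regularity witnessed by the same $0/1$ liftings supported on one side of the Radon partition. Your write-up merely supplies a bit more detail than the paper records, namely the bidiagonal-determinant dimension count and the explicit verification of the dependence.
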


\begin{proof}
    Let $N = 2k$.
    Note that for each $\boldlambda \in \Lambda_N$, 
    $\dim G^{\boldzero}_{\boldlambda} = n$ and 
    $G^{\mathbf{0}}_{\boldlambda}$ has $n+2$ vertices.
    Thus, there is a unique (up to simultaneous scaling of the coefficients) 
    affine dependence of the form 
    \[
        \sum_{i=0}^{N} c_i \boldv_i = \boldzero
    \]
    satisfying $\sum c_j = 0$ with $c_0,\dots,c_{n+1} \in \R$.
    Without loss of generality, 
    we may choose $c_0 = 0$ and $c_i = \lambda_ik/N$ for $i > 0$.
    
    The desired conclusions for the lemma then follow from \cite[Lemma~2.4.2]{TriangulationsBook}.
    Specifically,
    \[
        \Delta_+(G^{\mathbf{0}}_{\boldlambda}) = 
        \left\{ \conv\{ V \setminus \{\boldv_i\}\} \mid \lambda_i = \lambda_1 \right\}
    \]
    is the triangulation of $G^{\mathbf{0}}_{\boldlambda}$ corresponding to 
    the height vector $(\omega_0,\dots,\omega_N)$ where
    \[
        \omega_i = 
        \begin{cases}
            0 & \text{ if } c_i \leq 0, \\
            1 & \text{ if } c_i > 0
        \end{cases}
    \]
    and
    \[
        \Delta_-(G^{\mathbf{0}}_{\boldlambda}) = 
        \left\{ \conv\{ V \setminus \{\boldv_i\}\} \mid \lambda_i = -\lambda_1 \right\}
    \]
    is the triangulation corresponding to the heights
    \[
        \omega_i = 
        \begin{cases}
            0 & \text{ if } c_i \geq 0, \\
            1 & \text{ if } c_i < 0.
        \end{cases}
    \]
\end{proof}

We will be concerned with the particular lifting function 
$\omega : S_{C_N} \to \Z$ given by
\[
    \omega(\mathbf{a}) = 
    \begin{cases}
        0 & \text{ if } \mathbf{a} = \boldzero \\
        2 & \text{ if } \mathbf{a} = \pm \bolde_1 \\
        1 & \text{ otherwise } 
    \end{cases}
\]
as this will induce the desired regular, unimodular triangulation 
$\Delta_N$ of $P_{C_N}$.
To help us with notation, we will define
\[
    \Omega_{\omega}(P) = \conv \{ \omega(v) \mid v \in P \cap S_{C_N} \}
\]
for any polytope $P$ whose vertices are a subset of $S_{C_N}$.
If $X$ is a collection of polytopes whose vertices are subsets of $S_{C_N}$, then we let $\Omega_{\omega}(X) = \{\Omega_{\omega}(P) \mid P \in X\}$.

First, we identify normal vectors of simplices in $\Omega_{\omega}(\Delta_+(G^{\mathbf{0}}_{\lambda}))$.
Recall that a vector is \emph{upward-pointing} if its final coordinate is positive.

\begin{lemma}\label{lem:normal-even}
    Let $N$ be even. 
    If $\boldlambda \in \Lambda_N$ and $\lambda_1 = 1$, 
    then the upward-pointing inner normal vectors for all simplices in
    $\Omega_{\omega}(\Delta_+(G^{\mathbf{0}}_{\boldlambda}))$ are
    $x_{\boldlambda} = (x_1,\dots,x_{n+1})$ where 
    \[
        x_i = 
        \begin{cases}
            \sum_{j=1}^i \lambda_j & \text{ if } 1 \leq i < n+1, \\
            1 & \text{ if } i = n+1
        \end{cases}
    \]
    as well as 
    \[
        y_{\boldlambda,j} = x + \sum_{k=1}^{j-1} \bolde_k 
    \]
    for each $j>1$ such that $\lambda_j = 1$.
    If $\lambda_1 = -1$, then the upward-pointing inner normal vectors are 
    $x_{\lambda} = \sigma(x_{-\boldlambda}),y_{\boldlambda,j} = \sigma(y_{-\boldlambda,j})$ 
    where $\lambda_j > 0$ and where $\sigma$ is the map that negates the first $n$ coordinates.
\end{lemma}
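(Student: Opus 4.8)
The plan is to reduce to the case $\lambda_1 = 1$ and then, for each simplex $\sigma_m := \conv\{V_N \setminus \{\boldv_m\}\}$ of $\Delta_+(G^{\mathbf{0}}_{\boldlambda})$, obtain the normal of its $\omega$-lift by interpolating $\omega$ over the vertices of $\sigma_m$. Recall from \cref{lem: two triangulations} that the simplices of $\Delta_+(G^{\mathbf{0}}_{\boldlambda})$ are exactly the $\sigma_m$ with $\lambda_m = \lambda_1$. To reduce the $\lambda_1 = -1$ case to the $\lambda_1 = 1$ case, note that the linear involution $\boldx \mapsto -\boldx$ of $\R^n$ carries $V_N$ onto the vertex set of $G^{\mathbf{0}}_{-\boldlambda}$ (since $\boldv_i(-\boldlambda) = -\boldv_i(\boldlambda)$), fixes $\boldzero$, and commutes with $\omega$ because $\omega(-\bolda) = \omega(\bolda)$ for every $\bolda \in S_{C_N}$; hence it takes the regular triangulation $\Delta_+(G^{\mathbf{0}}_{\boldlambda})$ and its $\omega$-lift to those of $-\boldlambda$, and it transforms each upward-pointing inner normal by $\sigma$. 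So from now on assume $\lambda_1 = 1$ (and $N \geq 4$, the case $N = 2$ being degenerate).

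Fix $m$ with $\lambda_m = 1$. Because $\boldv_0 = \boldzero$ is a vertex of $\sigma_m$ and $\omega(\boldzero) = 0$, the affine hull of $\Omega_{\omega}(\sigma_m)$ passes through the origin, so it is a linear hyperplane $\{(\boldx, t) \in \R^{n+1} : t = \langle \bolda_m, \boldx\rangle\}$ for a unique $\bolda_m \in \R^n$, and its upward-pointing normal is $(-\bolda_m, 1)$. This is the inner normal of a genuine lower facet of $\Omega_{\omega}(G^{\mathbf{0}}_{\boldlambda})$ exactly when the remaining lifted vertex lies strictly above the hyperplane, that is, $\langle \bolda_m, \boldv_m\rangle < \omega(\boldv_m)$; there are no other points to test since $G^{\mathbf{0}}_{\boldlambda} \cap S_{C_N} = V_N$. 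Now $\bolda_m$ is determined by $\langle \bolda_m, \boldv_j\rangle = \omega(\boldv_j)$ for $1 \le j \le N$, $j \neq m$. Substituting $\boldv_j = \lambda_j(\bolde_{j-1} - \bolde_j)$, adopting $(\bolda_m)_0 = (\bolda_m)_{n+1} := 0$ (consistent with $\bolde_0 = \bolde_{n+1} = \boldzero$), using $\omega(\boldv_j) = 2$ when $j = 1$ and $\omega(\boldv_j) = 1$ otherwise, and dividing through by $\lambda_j \in \{-1,1\}$, these become the linear recursion
\[
    (\bolda_m)_{j-1} - (\bolda_m)_j \;=\; \lambda_j\, \omega(\boldv_j), \qquad j \in \{1,\dots,N\} \setminus \{m\}.
\]

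It remains to solve the recursion. For $m = 1$ it holds for all $j = 2,\dots,N$ with right-hand side $\lambda_j$; telescoping and imposing $(\bolda_1)_N = 0$ gives $(\bolda_1)_p = -\sum_{k=1}^p \lambda_k$ for $1 \le p \le n$, hence $(-\bolda_1, 1) = x_{\boldlambda}$, and $\langle \bolda_1, \boldv_1\rangle = \lambda_1\big((\bolda_1)_0 - (\bolda_1)_1\big) = 1 < 2$. For $m > 1$ the recursion breaks at $j = m$ into a forward block ($j = 1,\dots,m-1$, started from $(\bolda_m)_0 = 0$, whose $j = 1$ equation $(\bolda_m)_0 - (\bolda_m)_1 = 2$ forces $(\bolda_m)_p = -1 - \sum_{k=1}^p \lambda_k$ for $1 \le p \le m-1$) and a backward block ($j = m+1,\dots,N$, started from $(\bolda_m)_{n+1} = 0$, giving $(\bolda_m)_p = -\sum_{k=1}^p \lambda_k$ for $m \le p \le n$). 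These two ranges partition $\{1,\dots,n\}$, so $(-\bolda_m, 1) = x_{\boldlambda} + \sum_{k=1}^{m-1}\bolde_k = y_{\boldlambda, m}$; and combining the two blocks across the skipped index, $\langle \bolda_m, \boldv_m\rangle = \lambda_m\big((\bolda_m)_{m-1} - (\bolda_m)_m\big) = \lambda_m(-1 + \lambda_m) = 0 < 1$. As $m$ ranges over $\{1\} \cup \{j > 1 : \lambda_j = 1\}$, which is precisely the index set of $\Delta_+(G^{\mathbf{0}}_{\boldlambda})$, we obtain exactly the asserted collection of normals $\{x_{\boldlambda}\} \cup \{y_{\boldlambda, j} : j > 1,\ \lambda_j = 1\}$.

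I expect the only real difficulty to be bookkeeping: carrying the conventions $\bolde_0 = \bolde_{n+1} = \boldzero$ and the skipped index $m$ through the two one-sided telescopings, and checking that the forward and backward partial solutions meet the claimed closed forms across the ``gap'' at $j = m$ — this is what makes $\langle \bolda_m, \boldv_m\rangle$ collapse to $0$, hence the strict inequality $\langle \bolda_m, \boldv_m\rangle < \omega(\boldv_m)$ and the lower-facet conclusion. One ancillary fact should also be recorded: $G^{\mathbf{0}}_{\boldlambda} \cap S_{C_N} = V_N$. This holds because each $-\boldv_i$ is a vertex of $P_{C_N}$ distinct from all the $\boldv_j$ and therefore cannot lie in the subpolytope $G^{\mathbf{0}}_{\boldlambda}$; as $\boldzero$ is the only element of $S_{C_N}$ that is not already a vertex of $P_{C_N}$, the only points of $S_{C_N}$ inside $G^{\mathbf{0}}_{\boldlambda}$ are its own vertices.
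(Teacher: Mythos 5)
Your proof is correct and takes essentially the same route as the paper's: both reduce the $\lambda_1=-1$ case to $\lambda_1=1$ via the sign involution $\sigma$, and both establish the hyperplane conditions through the telescoping identity $\langle \boldx,\lambda_j(\bolde_{j-1}-\bolde_j)\rangle=\lambda_j(x_{j-1}-x_j)$. The only difference is one of direction: you solve the interpolation recursion to \emph{derive} the normals (and also record the strictly-above check on the omitted vertex, which the paper defers to the proof of the theorem that follows), whereas the paper plugs in the stated formulas and verifies them.
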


\begin{proof}
    First observe that, by construction, 
    each vector under consideration is upward-pointing.
    Next, let $\lambda_1 = 1$.
    It is then straightforward to verify that
    \[
        \langle \boldx, \lambda_j (\bolde_{j-1}-\bolde_j) + \bolde_{n+1} \rangle = -\lambda_j^2 + 1 = 0
    \]
    for all $1 < j \leq n+1$.
    Following this same process, one may verify that 
    the hyperplane for which $y_{\lambda,j}$ is normal contains all vertices 
    of $G^{\mathbf{0}}_{\boldlambda}$ except $\lambda_j(\bolde_{j-1} - \bolde_j)$. 
    
    Finally notice that if $\boldlambda' = -\boldlambda$, 
    then $-T \in \Delta_+(G^{\mathbf{0}}_{\boldlambda'})$ 
    for each cell $T \in \Delta_+(G^{\mathbf{0}}_{\boldlambda})$.
    It directly follows that $\sigma(x_{\boldlambda})$ and $\sigma(y_{\boldlambda,j})$ 
    are the upward-pointing inner normal vectors of the simplices in $\Delta_+(G_{\boldlambda}^{\boldzero})$ for all $\boldlambda$ satisfying $\lambda_1 = -1$.
\end{proof}

\begin{theorem}
    Let $N$ be even. 
    The height function $\omega : S_G \to \Z$ given by
    \begin{equation}\label{equ:weights-even}
        \omega(\mathbf{a}) = 
        \begin{cases}
            0 & \text{ if } \mathbf{a} = \mathbf{0}, \\
            2 & \text{ if } \mathbf{a} = \pm e_1, \\
            1 & \text{ otherwise } 
        \end{cases}
    \end{equation}
    induces a regular unimodular triangulation $\Delta_N$ 
    of the point configuration $S_{C_N}$.
    Specifically,
    \begin{equation}\label{equ:triangulation-even}
        \Delta_N = \bigcup_{\lambda \in \Lambda_N}\Delta_+(G^{\mathbf{0}}_{\lambda})
    \end{equation}
\end{theorem}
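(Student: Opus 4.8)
The plan is to show that the collection $\Delta_N = \bigcup_{\boldlambda \in \Lambda_N} \Delta_+(G^{\mathbf{0}}_{\boldlambda})$ is (i) a triangulation of $S_{C_N}$, (ii) unimodular, and (iii) the regular triangulation induced by the height function $\omega$ given in~\eqref{equ:weights-even}. Conceptually, the starting point is the pyramidal decomposition described in~\cref{sec:triangulation}: the $G^{\mathbf{0}}_{\boldlambda}$ range over the pyramids with apex $\boldzero$ over the facets $G_{\boldlambda}$ of $P_{C_N}$, and together they subdivide $P_{C_N}$. Since \cref{lem: two triangulations} tells us that each $G^{\mathbf{0}}_{\boldlambda}$ has exactly the two regular triangulations $\Delta_\pm$, and $\Delta_+$ was singled out consistently (via the choice $\lambda_1$ as the ``reference sign''), refining each pyramid by its $\Delta_+$ triangulation yields a simplicial subdivision of $P_{C_N}$ whose cells all have $\boldzero$ as a vertex together with $n$ of the $\boldv_i = \lambda_i(\bolde_{i-1}-\bolde_i)$. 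That this is a genuine triangulation of the \emph{point configuration} $S_{C_N}$ (not just of the polytope) follows because every vertex used lies in $S_{C_N}$, and no point of $S_{C_N}$ lies in the interior of any cell — the latter because the only interior lattice point available is $\boldzero$, which is always a vertex.

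Next I would establish unimodularity. Each cell of $\Delta_+(G^{\mathbf{0}}_{\boldlambda})$ is $\conv(\{\boldzero\} \cup \{\boldv_j : j \neq i\})$ for an appropriate index $i$; its normalized volume is $|\det M|$ where $M$ has rows $\boldv_j = \lambda_j(\bolde_{j-1}-\bolde_j)$ for $j \in \{1,\dots,N\}\setminus\{i\}$ (recalling $\bolde_0 = \bolde_N = \boldzero$ since $N = n+1$). Up to signs this determinant is that of a matrix whose rows are consecutive-difference vectors on a path, which telescopes to $\pm 1$; alternatively, one invokes the unimodular equivalence with $Q_N$ and the fact that the analogous simplices there are unimodular. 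Since every maximal cell has normalized volume $1$, the subdivision is unimodular, and in particular the number of cells equals $n!\Vol(P_{C_N})$, the adjacency polytope bound $\apbound$ — a useful consistency check and exactly the path count promised for the homotopy.

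The heart of the proof is part (iii): that $\omega$ induces precisely $\Delta_N$. Here \cref{lem:normal-even} does the decisive work. The regular subdivision induced by $\omega$ on the point configuration is obtained by taking, for each maximal cell, the set of points of $S_{C_N}$ lying on a lower supporting hyperplane of the lifted configuration $\Omega_{\omega}(S_{C_N})$. \cref{lem:normal-even} exhibits, for each $\boldlambda$ and each cell $T \in \Delta_+(G^{\mathbf{0}}_{\boldlambda})$, an explicit upward-pointing inner normal vector ($x_{\boldlambda}$ or some $y_{\boldlambda,j}$) to the hyperplane through the lifted vertices of $T$. To conclude I must verify two things for each such hyperplane: first, that it actually supports $\Omega_{\omega}(S_{C_N})$ from below — i.e., that $\langle x_{\boldlambda}, (\bolda, \omega(\bolda))\rangle$ is minimized over $\bolda \in S_{C_N}$ exactly on the lifted vertices of $T$, with strict inequality elsewhere; and second, that as $\boldlambda$ ranges over $\Lambda_N$ and $T$ over the cells, these lower faces are exactly the cells of $\Delta_N$ and nothing more. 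The first is a direct computation with the explicit coordinates of $x_{\boldlambda}$ (partial sums of the $\lambda_j$) paired against the three possible lift values $\{0,1,2\}$; the pairing with $\boldzero$ gives $0$, with $\pm\bolde_1$ gives the lifted value adjusted by $\pm x_1$, and with a generic $\pm(\bolde_{i-1}-\bolde_i)$ gives $1 \mp \lambda_i$ or similar — one checks the minimum is attained exactly on $T$. The second follows once we know each lower face is a full-dimensional simplex in $\Delta_N$ and the $\Delta_N$ cells already tile $P_{C_N}$, so there is no room for anything else. I expect the main obstacle to be the careful bookkeeping in this lower-support verification: handling the $\lambda_1 = 1$ versus $\lambda_1 = -1$ cases (related by the sign-flip $\sigma$), treating the role of $\pm\bolde_1$ as the image of $\pm(\bolde_1+\cdots+\bolde_n)$ under $A^{-1}$ correctly, and ensuring the inequalities are strict off the intended cell so that the induced subdivision is genuinely a triangulation rather than something coarser. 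Once those inequalities are pinned down, regularity, unimodularity, and the identification~\eqref{equ:triangulation-even} all drop out together.
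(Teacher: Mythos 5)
Your proposal is correct and follows essentially the same route as the paper: it leans on \cref{lem:normal-even} to exhibit the upward-pointing inner normals, verifies that each lifted simplex lies on a lower supporting hyperplane with strict inequality at the remaining points of $S_{C_N}$ (including the $\pm\bolde_1$ points lifted to height $2$), uses the covering argument that the cells of $\bigcup_{\boldlambda}\Delta_+(G^{\mathbf{0}}_{\boldlambda})$ already fill $P_{C_N}$ so no other lower facets can appear, and settles unimodularity by a triangular determinant computation. The only cosmetic difference is the order of the steps; the substance matches the paper's proof.
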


\begin{proof}
    Let $\Delta_N$ denote the regular subdivision of $P_{C_N}$ induced by $\omega$.
    For $\lambda \in \Lambda_N$, consider the vectors $x_{\lambda}, y_{\lambda,j}, \sigma(x_{\lambda}), \sigma(y_{\lambda,j})$, as defined in Lemma~\ref{lem:normal-even}.
    First, we focus on $x_{\lambda}$.
    We have already seen that each vertex of $G^{\mathbf{0}}_{\lambda}$ except for $-\lambda_1e_1$ lies on the hyperplane with normal vector $x$.
    In fact, it is straightforward to check that 
    \[
        \langle x,-\lambda_j(\bolde_{j-1} - \bolde_j) + \bolde_{n+1} = \lambda_j^2 + 1 > 0
    \]
    for all $1 < j \leq n+1$, and that
    \[
        \langle x,\pm \bolde_1 + 2\bolde_{n+1}\rangle = \pm 1 + 2 > 0,
    \]
    so $x$ defines a facet of $\Omega_{\omega}(P_{C_N})$.

    Following this same process, one may verify that $y_{\lambda,j}$ 
    defines a facet containing all vertices of $G^{\mathbf{0}}_{\lambda}$ 
    except $\lambda_j(\bolde_{j-1} - \bolde_j)$.
    By an argument that is symmetric in the first $n$ coordinates, $\sigma(x_{\lambda})$ and $\sigma(y_{\lambda,j})$ also defined facets of $\Omega_{\omega}(P_{C_N})$.
    
    Ranging over all $\lambda \in \Lambda_N$, we have identified a collection of simplices $C$ that are lower facets of $\Omega_{\omega}(P_{C_N})$.
    Projecting each $C$ back down to $\R^n$, we get
    \begin{equation}
        \bigcup_{\lambda \in \Lambda_N}\Delta_+(G^{\mathbf{0}}_{\lambda}) \subseteq \Delta_N.
    \end{equation}
    
    In fact, this set covers $P_{C_N}$ completely: let $a \in P_{C_N}$.
    Then for some nonzero $c \in \R$, $ca$ is on the boundary of $P_{C_N}$.
    Thus, $ca \in G_{\boldlambda}$ for some $\lambda \in \Lambda_N$, and $a \in G^{\mathbf{0}}_{\lambda}$.
    Therefore, $a \in C$ for some cell $C \in \Delta_+(G^{\mathbf{0}}_{\lambda})$.
    
    Together, this shows that $\Delta_N$ is a triangulations of $P_{C_N}$, and is the regular triangulation induced by $\omega$.
    To see that this triangulation is unimodular, 
    recall that all simplices in $\Delta_N$ are unimodularly equivalent to the simplex whose nonzero vertices are 
    \[
        \bolde_0-\bolde_1,\bolde_1-\bolde_2,\dots,\bolde_{\lfloor \frac{n}{2} \rfloor}-\bolde_{\lfloor \frac{n}{2} \rfloor+1}, -(\bolde_{\lfloor \frac{n}{2} \rfloor+1}-\bolde_{\lfloor \frac{n}{2} \rfloor+2}),\dots,-(\bolde_{n-1}-\bolde_n),-\bolde_n.
    \]
    Placing these vertices as the columns of a matrix, in this order, results in a lower-triangular matrix with determinant $\pm 1$.
    Thus, the corresponding simplex, and therefore all simplices in $\Delta_N$, are unimodular.
    This completes the proof.
\end{proof}

\begin{remark}\label{rmk:no-01}
    The direct acyclic decomposition scheme developed in~\cite{Chen2019Directed}
    is equivalent to the process of computing a regular subdivision of the adjacency polytope
    induced by certain $0/1$ weights.
    It is shown that for certain graphs, this process will produced regular unimodular triangulations
    which is desired due to their connection to \emph{primitive decomposition} of a Kuramoto network.
    Here, however, we can see this is not possible in general.
    In particular, 
    with the aid of Macaulay2~\cite{M2} to test all $2^9 = 512$ possible $0/1$ weight orders for $P_{C_4}$, 
    we verified that only $4$ choices of weights produce a triangulation of the polytope, 
    and of these, none are unimodular.
    So, in then sense of bounding the heights of the lattice points of $\omega(P_{C_N})$ 
    for all even $N$, using only nonnegative integer heights, to produce a 
    regular unimodular triangulation, the $\omega$ given in this subsection is best possible.
\end{remark}


\subsection{Odd $N$}

For the entirety of this subsection, we assume that $N$ is odd.
Recall that in this case, the facets of $Q_N$ consist of all sets of the form 
\[
    F_{j,\lambda} = \conv\{\lambda_1(-\bolde_1-\bolde_2-\cdots - \bolde_n),
    \lambda_2 \bolde_1,\dots,\widehat{\lambda_j \bolde_{j+1}},\dots,\lambda_N \bolde_n\}
\]
Tracing this back to $P_{C_N}$, we find that its facets are of the form
\[
    G_{\boldlambda} = \conv\{
        \lambda_1(\bolde_0-\bolde_1), \dots, 
        \widehat{\lambda_j(\bolde_{j-1}-\bolde_j)},\dots,\lambda_N(\bolde_n-\bolde_N) \mid 
        \lambda \in \Lambda_{j,N}
    \}
\]
Set
\[
    G^{\mathbf{0}}_{\lambda} = \conv\{
        \lambda_i(\bolde_i-\bolde_{i+1}) \mid 
        (\lambda_1,\dots,\lambda_N) \in \Lambda_{j,N}
    \},
\]
and let
\begin{equation}\label{equ:triangulation-odd}
    \Delta_N = \{G^{\mathbf{0}}_{\lambda} \mid \lambda \in \Lambda_N\}.
\end{equation}
By construction, since each $G_{\boldlambda}$ is a simplex, 
$\Delta_N$ is a triangulation of $P_{C_N}$.
It is straightforward to check that the matrix whose columns are the nonzero vertices of $G^{\mathbf{0}}_{\lambda}$ has determinant $\pm 1$ for each $\lambda \in \Lambda_N$, so $\Delta_N$ is a unimodular triangulation.

Now, let $\omega : S_{C_N} \to \Z$ be the height function given by
\begin{equation}\label{equ:weights-odd}
    \omega(\mathbf{a}) = 
    \begin{cases}
        0 & \text{ if } \mathbf{a} = \boldzero \\
        1 & \text{ otherwise}.
    \end{cases}
\end{equation}
It is clear from this choice that the lower facets of the lifted polytope 
$\Omega_{\omega}(P_{C_N})$ are of the form
\[
    \conv\{\mathbf{0}, G_{\boldlambda} \times \{1\}\},
\]
so their projections back onto $\R^n$ are exactly 
the simplices $G^{\mathbf{0}}_{\lambda}$ for all $\lambda \in \Lambda_N$.
With this work, we have shown the following.

\begin{proposition}
    The set $\Delta_N$ is a regular, unimodular triangulation of $P_{C_N}$, 
    and is induced by the height function $\omega$ in \eqref{equ:weights-odd}.
\end{proposition}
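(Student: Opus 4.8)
The plan is to establish the three assertions of the proposition in order: that $\Delta_N$ is a triangulation of $P_{C_N}$, that every one of its cells is a unimodular simplex, and that this triangulation is precisely the regular one induced by the height function $\omega$ of \eqref{equ:weights-odd}.

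For the first two claims I would lean directly on the facet description of $P_{C_N}$ recalled just above. Since $N$ is odd, every facet $G_{\boldlambda}$ of $P_{C_N}$ is a simplex, and $\boldzero$ is an interior point of $P_{C_N}$ (cf.\ \cref{rmk:pt-conf}), so coning the boundary complex of $P_{C_N}$ from $\boldzero$ produces a genuine polyhedral subdivision whose maximal cells are exactly the $n$-simplices $G^{\boldzero}_{\boldlambda} = \conv\{\boldzero, G_{\boldlambda}\}$: each is $n$-dimensional because $\boldzero \notin \operatorname{aff} G_{\boldlambda}$ (a facet being a proper face not spanned by an interior point), and two of them meet in $\conv\{\boldzero, G_{\boldlambda}\cap G_{\boldlambda'}\}$, a common face of both. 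As all the vertices involved lie in $S_{C_N}$ and no non-origin lattice point of $S_{C_N}$ is interior to $P_{C_N}$, this is in fact a triangulation of the point configuration $S_{C_N}$. For unimodularity it suffices, by transitivity of unimodular equivalence together with \cref{rmk:unimod-equiv}, to check one representative cell: taking $\boldlambda \in \Lambda_{N,N}$ (so $\lambda_N = 0$) and ordering the nonzero vertices as $-\lambda_1\bolde_1,\ \lambda_2(\bolde_1-\bolde_2),\ \dots,\ \lambda_n(\bolde_{n-1}-\bolde_n)$ makes the matrix of columns upper bidiagonal with diagonal $(-\lambda_1,\dots,-\lambda_n)\in\{\pm1\}^n$, hence of determinant $\pm1$. (For a general $\boldlambda\in\Lambda_{j,N}$ one instead expands along the coordinate row of $\bolde_{j-1}$, which after removal of the $j$th vertex contains a single nonzero entry, and then the residual matrix is block-triangular of the same type, so the claim follows by induction on $N$.)

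For regularity I would pin down the entire lower hull of $\Omega_{\omega}(P_{C_N})$. First, $\Omega_{\omega}(P_{C_N})$ is $(n+1)$-dimensional, since the apex $(\boldzero,0)$ sits at height $0$ while every other lifted point sits at height $1$; and every lower facet must contain $(\boldzero,0)$, because a face avoiding it lies in the hyperplane $\{y=1\}$ and is therefore an upper, not lower, facet. Conversely, given a facet $G_{\boldlambda}$ of $P_{C_N}$ with outer normal $\mathbf{u}$ scaled so that $\langle\mathbf{u},\cdot\rangle = 1$ on $G_{\boldlambda}$ and $\langle\mathbf{u},\cdot\rangle < 1$ on all other points of $S_{C_N}$ (in particular $\langle\mathbf{u},\boldzero\rangle = 0$), the linear functional $(\mathbf{x},y)\mapsto y - \langle\mathbf{u},\mathbf{x}\rangle$ vanishes on $(\boldzero,0)$ and on $G_{\boldlambda}\times\{1\}$, is strictly positive on every remaining lifted point of $S_{C_N}$, and has positive last coordinate; hence $\conv(\{(\boldzero,0)\}\cup(G_{\boldlambda}\times\{1\}))$ is a lower facet, with projection $G^{\boldzero}_{\boldlambda}$. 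Since the $G^{\boldzero}_{\boldlambda}$ already tile $P_{C_N}$ with pairwise-disjoint interiors (by the first paragraph), these are \emph{all} the lower facets, so the regular subdivision induced by $\omega$ is exactly $\Delta_N$; combined with the previous paragraph, $\Delta_N$ is a regular unimodular triangulation.

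I do not anticipate a genuine obstacle here, since the proposition mostly consolidates the facet structure of $P_{C_N}$ with \cref{rmk:unimod-equiv} and \cref{rmk:pt-conf}. The step requiring the most care is the regularity argument: one must verify the supporting-hyperplane inequality over \emph{all} lifted points of $S_{C_N}$, not merely the vertices of the facet under consideration, and then confirm that no lower facet has been overlooked — which is exactly what the tiling/covering property of the $G^{\boldzero}_{\boldlambda}$ supplies.
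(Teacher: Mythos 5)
Your proposal is correct and follows essentially the same route as the paper: cone the (simplicial, since $N$ is odd) facets of $P_{C_N}$ from the interior point $\boldzero$, check unimodularity by a determinant computation on the nonzero vertices, and observe that the $0/1$ lifting makes the lower facets of $\Omega_\omega(P_{C_N})$ exactly the cones $\conv\{(\boldzero,0),\, G_{\boldlambda}\times\{1\}\}$. You simply supply the details the paper labels ``straightforward'' and ``clear,'' most usefully the verification that these are \emph{all} the lower facets.
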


We can, in fact, be more specific when identifying the lower facets of 
$\Omega_{\omega}(P_{C_N})$.

\begin{corollary}\label{cor:normal-odd}
    The upward-pointing inner normal vectors for 
    $\Omega_{\omega}(G^{\mathbf{0}}_{\lambda})$ are $x=(x_1,\dots,x_N)$ where
    \[
        x_k = \begin{cases}
            \sum_{i=1}^k \lambda_i & \text{ if } i < N, \\
            1 & \text{ if } i = N
        \end{cases}
    \]
    for all $\lambda \in \Lambda_N$.
\end{corollary}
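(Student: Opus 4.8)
The plan is to deduce the corollary from the preceding proposition. That proposition already certifies that each $\Omega_{\omega}(G^{\mathbf{0}}_{\lambda})$ is a lower facet of the lifted point configuration $\Omega_{\omega}(P_{C_N})$; hence, as a facet of a full-dimensional polytope in $\R^{N}$, its affine hull is a hyperplane whose inner normal direction is unique up to positive scaling and has positive last coordinate, so all that remains is to identify that direction with the proposed $x$. First I would list the vertices of $\Omega_{\omega}(G^{\mathbf{0}}_{\lambda})$ for $\lambda\in\Lambda_{j,N}$: the lifted origin $(\boldzero,0)\in\R^{N}$ together with the lifted edge points $(\lambda_i(\bolde_{i-1}-\bolde_i),1)$ for $i\in\{1,\dots,N\}\setminus\{j\}$, where we retain the convention $\bolde_0=\bolde_N=\boldzero$ in $\R^{n}$. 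Because $(\boldzero,0)$ is among these vertices, the affine hull is the linear hyperplane through the remaining ones, so it suffices to check that $\langle x,(\lambda_i(\bolde_{i-1}-\bolde_i),1)\rangle=0$ for every $i\neq j$.

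Writing $x=(x_1,\dots,x_{n},1)$ with $x_k=\sum_{\ell=1}^{k}\lambda_\ell$ and the empty-sum convention $x_0=0$, this inner product equals $\lambda_i(x_{i-1}-x_i)+1$ whenever $1\le i\le n$, and the telescoping identity $x_{i-1}-x_i=-\lambda_i$ holds for every such $i$ regardless of whether $i-1$ or $i$ equals $j$, since $\lambda_j=0$ is just one of the summands; hence the value is $-\lambda_i^2+1=0$ because $\lambda_i\in\{-1,1\}$. The single boundary case is $i=N$ (which occurs only when $j\neq N$), where $\bolde_{N-1}-\bolde_N=\bolde_n$ and the inner product equals $\lambda_N x_n+1$; here I would invoke the balance condition $\sum_{\ell=1}^{N}\lambda_\ell=0$ to get $x_n=-\lambda_N$, so the value is again $-\lambda_N^2+1=0$. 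This exhibits $x$ as a nonzero vector vanishing on every lifted vertex of $G^{\mathbf{0}}_{\lambda}$ with positive last coordinate, hence as the claimed upward-pointing inner normal, and the formula manifestly does not depend on $j$, so the same vector serves uniformly for all $\lambda\in\Lambda_N=\bigcup_{j=1}^{N}\Lambda_{j,N}$.

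If one prefers not to quote the proposition, the same computation yields a self-contained argument: for any other edge point $\bolda=\pm(\bolde_{k-1}-\bolde_k)\in S_{C_N}$ one has $\langle x,(\bolda,1)\rangle=\mp\lambda_k+1\ge 0$, with equality exactly when $(\bolda,1)$ is one of the listed vertices of $\Omega_{\omega}(G^{\mathbf{0}}_{\lambda})$, so $\{z:\langle x,z\rangle=0\}$ supports $\Omega_{\omega}(P_{C_N})$ from below along precisely that face. I do not anticipate a genuine obstacle here: this is the same partial-sum bookkeeping already used for the even case in Lemma~\ref{lem:normal-even}, and the only places demanding care are the two boundary indices $i=1$ and $i=N$ (handled by $\bolde_0=\bolde_N=\boldzero$, $x_0=0$, and $\sum_\ell\lambda_\ell=0$) together with the observation that a single vector $x$ normalizes every facet because the coordinate $j$ that is zeroed out never enters the formula.
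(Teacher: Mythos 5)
Your proof is correct and follows essentially the same route as the paper's: both verify directly that the proposed vector $x$ annihilates every lifted nonzero vertex $(\lambda_i(\bolde_{i-1}-\bolde_i),1)$ via the telescoping identity $x_{i-1}-x_i=-\lambda_i$, reducing each inner product to $-\lambda_i^2+1=0$. Your treatment is in fact slightly more complete, since you explicitly handle the boundary indices $i=1$ and $i=N$ (using $x_0=0$ and $\sum_\ell\lambda_\ell=0$) and add the supporting-hyperplane inequality, neither of which changes the underlying argument.
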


\begin{proof}
    Let $\Omega_{\omega}(G^{\boldzero}_{\boldlambda})$ be a lower facet of 
    $\Omega_{\omega}(P_{C_N})$ for some $\boldlambda \in \Lambda_{j,N}$, 
    and select a nonzero vertex $\boldv$ of the facet.
    Since this vertex is nonzero, we know $\boldv$ is of the form 
    $\boldv = \lambda_{r+1}(\bolde_r - \bolde_{r+1}+\bolde_N$ for some $r \neq j$. 
    Then 
    \[
        \langle \boldx, \boldv\rangle = 
        \lambda_{r+1} \left(\sum_{i=1}^r \lambda_i - \sum_{l = 1}^{r+1} \lambda_l\right) + 1 =
        -\lambda_{r+1}^2 + 1 = 0.
    \]
    Thus, $\boldx$ is normal to $\Omega_{\omega}(G^{\boldzero}_{\boldlambda})$.
\end{proof}

\section{Cell enumeration algorithm}\label{sec:algo}

In this section, we briefly summarize the algorithm for
for constructing a regular unimodular triangulation for the
adjacency polytope $P_{C_N}$ as proposed above.
Here, we shall focus only on the enumeration of all the
upward pointing inner normal vectors of the lifted polytope
$\Omega(P_{C_N})$ of the point configuration $S_{C_N}$,
since these are directed used in the homotopy construction
to be described in~\cref{sec:homotopy}.
Moreover, these objects directly correspond to tropical stable intersections
as we shall discuss in detail in~\cref{sec:tropical}.
Once a normal vector $\boldv$ is obtained, the vertices of the corresponding cell
can be found easily by computing the minimizing set of the linear functional
$\langle \cdot \,,\, \boldv \rangle$.

The algorithm \textsf{EnumerateNormals}($N$) for enumerating inner normal vectors 
is listed in~\cref{alg:cells}.
It takes the argument $N$,
which is the number of nodes in the cycle graph and produces
the upward pointing inner normal vectors of the
lower facets of $\Omega(P_{C_N})$ which are in one-to-one correspondence
with the simplices in the regular unimodular triangulation $\Delta_N$.

\begin{algorithm}[h]
    \caption{\textsf{EnumerateNormals($N$)}: Enumeration of upward pointing inner normals}\label{alg:cells}
    \renewcommand{\algorithmicrequire}{\textbf{Input:}}
    \renewcommand{\algorithmicensure}{\textbf{Output:}}
    \begin{algorithmic}
        \REQUIRE $N \in \Z^+, N > 2$.
        \ENSURE Set $C$ of all upward pointing inner normals.
        \STATE $C \gets \varnothing$
        \FORALL{$(\lambda_1,\dots,\lambda_n) \in \Lambda_N$}
            \FOR{$k = 1,\dots,n$}
                \STATE $x_k \gets \sum_{i=1}^k \lambda_i$
            \ENDFOR
            \STATE $x_N \gets 1$
            \STATE $\boldx \gets [ x_1, \dots, x_N ]$
            \STATE $C \gets C \cup \{ \boldx \}$
            \IF{$N$ is even and $N > 2$}
                \FOR{$j=1,\dots,n$}
                    \IF{$\lambda_j = 1$}
                        \STATE $\boldy \gets \boldx + \sum_{k=1}^{j-1} \bolde_k$
                        \STATE $C \gets C \cup \{ \boldy \}$
                    \ENDIF
                \ENDFOR
            \ENDIF
        \ENDFOR
        \RETURN $C$
    \end{algorithmic}
\end{algorithm}

Note that this algorithm is \emph{pleasantly parallel}
since the description of vectors associated with indices $\boldlambda \in \Lambda_N$
are independent from one another.
The cost for producing each normal vector is $O(N^2)$,
and no additional storage is needed.
\section{Software implementation}\label{sec:implementation}

The main algorithms for generating the cells in the regular unimodular triangulation
$\Delta_N$ of $P_{C_N}$ is implemented in an open source Python package 
\texttt{kap-cycle}~\cite{Chen2018KAPCycle}. 
In addition to the cells, this package also produces the upward pointing inner normals
corresponding to each cell.
That is, it provides all the necessary information for bootstrapping the 
adjacency polytope homotopy proposed in~\cref{sec:homotopy}.


\section{The Adjacency Polytope Homotopy for Kuramoto Equations}\label{sec:homotopy}

We now return to the problem of find all isolated complex solutions of~\eqref{equ:kuramoto-rat}.
Equivalently, these are the solutions of $F_{C_N}^R$ defined in~\eqref{equ:kuramoto-rand}.
Utilizing the unimodular regular triangulation of the adjacency polytope $P_{C_N}$,
in this section we describe a specialized \emph{polyhedral homotopy}
\cite{huber_polyhedral_1995}
construction for locating all of these complex solutions
\emph{yet avoid the computationally expensive steps associated with polyhedral homotopy}.

Consider the function $H_{C_N} : \C^n \times \C \to \C^n$ with 
$H_{C_N}(\mathbf{x},t) = (H_{{C_N},1},\dots,H_{{C_N},n})$ given by
\begin{equation}
    H_{C_N,k}(x_1,\dots,x_n,t) = 
    c_{k}^R - 
    \sum_{\{i,j\} \in \edges(C_N)} 
    \left(
        a_{ijk}^R
        \frac{x_i}{x_j} + 
        a_{jik}^R \frac{x_j}{x_i}
    \right) 
    t^{\omega_{ij}}
    \quad \text{ for } k = 1,\dots,n
    \label{equ:homotopy}
\end{equation}
where $\omega_{ij} = \omega(\bolde_i - \bolde_j)$ 
as given in~\eqref{equ:weights-even} or~\eqref{equ:weights-odd}
depending on the parity of $N$.
Clearly, $H_{C_N}(\boldx,1) = F_{C_N}^R(\boldx)$.
As $t$ varies strictly between 0 and $1$ within the interval $[0,1]$, 
$H_{C_N}(\mathbf{x},t)$ represents a smooth deformation of the system $F_{C_N}^R$.
We shall show that under this deformation, the corresponding complex roots also vary smoothly.
Thus, the deformation forms smooth paths reaching the complex roots of $F_{C_N}^R$
and, equivalently, that of the algebraic synchronization system $F_{C_N}$.

\begin{proposition}
    For generic choices of the parameters, 
    the solution set of $H_{C_N}(\boldx,t) = \boldzero$ within $\C^n \times (0,1)$
    consists of $\apbound$ smooth curves 
    that are smoothly parametrized by $t \in (0,1)$,
    and the limit point of these curves as $t \to 1$ are precisely the 
    isolated complex solutions of $F_{C_N}(\boldx) = \boldzero$.
\end{proposition}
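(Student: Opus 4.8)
The plan is to recognize $H_{C_N}(\boldx,t)$ as a polyhedral homotopy in the sense of Huber--Sturmfels~\cite{huber_polyhedral_1995}, applied to the unmixed system $F_{C_N}^R$ whose common support is the point configuration $S_{C_N}$, and then transport the standard convergence theory through the regular unimodular triangulation $\Delta_N$ constructed in~\cref{sec:triangulation}. First I would clear denominators: multiplying $H_{C_N,k}$ by $\prod_{i=1}^n x_i$ (or, more economically, by the monomial $\boldx^{\boldv}$ that clears only the relevant negative exponents) turns the rational homotopy into a genuine polynomial homotopy whose coefficients are polynomials in $t$ with exponents drawn from $\{0,1,2\}$, and whose zero set in $(\C^*)^n \times \C$ agrees with that of $H_{C_N}$ because we stay on the torus. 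The lifting function $\omega$ on $S_{C_N}$ is exactly the one whose induced regular subdivision is $\Delta_N$; by the theorem (even $N$) and proposition (odd $N$) of~\cref{sec:triangulation} this subdivision is a \emph{unimodular triangulation}, so every mixed cell is a unimodular simplex.

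Second, I would invoke the polyhedral homotopy count: the number of solution curves of $H_{C_N}(\boldx,t)=\boldzero$ in $(\C^*)^n \times (0,1)$, counted with multiplicity, equals the sum over cells $C \in \Delta_N$ of the mixed volume contribution of $C$, which for a unimodular triangulation is just the number of cells, and which by construction equals the normalized volume $n!\Vol(P_{C_N}) = \apbound$ established in~\cite{ChenDavisMehta2018}. For generic choices of the coefficients $c_k^R, a_{ijk}^R$ the binomial-type start systems associated to each cell are nondegenerate, so each cell contributes exactly one smooth path, each path is smooth and has no singularities over $(0,1)$ (the homotopy Jacobian with respect to $\boldx$ is generically nonsingular along the path), and no path diverges to the torus boundary or to infinity for $t$ bounded away from $1$ — this is the standard ``no path crossing, no path escaping'' argument for polyhedral homotopies with a coherent lifting, using that $\omega$ is an integer lifting inducing a regular triangulation. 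Hence the solution set in $\C^n \times (0,1)$ is exactly $\apbound$ smooth curves smoothly parametrized by $t$.

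Third, for the endpoint behavior as $t \to 1$ I would argue that at $t=1$ the homotopy specializes to $F_{C_N}^R$, which by~\cref{rmk:generic} has exactly $\apbound$ isolated complex solutions in $(\C^*)^n$ for generic parameters, all of multiplicity one. A standard compactness/limiting argument (e.g. via a projective or toric compactification, or Morgan--Sommese parameter-continuation theory~\cite{Li2003Numerical,Sommese2005}) shows each of the $\apbound$ curves converges to a point of $V(F_{C_N}^R)$, and since the curves are disjoint and there are exactly $\apbound$ of each, the limit map is a bijection onto the isolated solutions; finally the zero set of $F_{C_N}^R$ coincides with that of $F_{C_N}$ since $R$ is nonsingular.

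The main obstacle I expect is the verification that the genericity locus is nonempty and that the standard polyhedral-homotopy regularity hypotheses genuinely apply to \emph{this} homotopy: the start systems here are not arbitrary binomials but the specific systems dictated by the cells of $\Delta_N$ (and the clearing-of-denominators step mildly changes the supports), so one must check that the discriminant-type polynomial $D$ of~\cref{rmk:generic}, together with the finitely many conditions ensuring each cell's start system is nondegenerate and the Jacobian stays nonsingular along each path, is not identically zero. The cleanest route is to exhibit one parameter choice where everything is transverse — which is essentially guaranteed because the adjacency polytope bound is attained generically — and then conclude that the bad locus is a proper algebraic subset. The remainder is bookkeeping: matching the lifting exponents $\omega_{ij}\in\{0,1,2\}$ to the cells of $\Delta_N$ and confirming unimodularity gives path count exactly $\apbound$ rather than a strict upper bound.
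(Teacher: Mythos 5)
Your outline reaches the right conclusion, but it takes a genuinely different route from the paper and, in one place, leans on a step that does not work as stated. The paper's proof of this proposition never touches the triangulation $\Delta_N$ or the cell/start systems (those enter only later, for the per-cell homotopies $H_T$). Instead it observes that for every $t \neq 0$ the system $H_{C_N}(\cdot,t)$ is again of the unmixed form~\eqref{equ:kuramoto-rand} with coefficients rescaled by $t^{\omega_{ij}}$, and then argues that genericity of the original coefficients forces $H_{C_N}(\cdot,t)$ to remain generic --- hence to have exactly $\apbound$ isolated solutions --- for \emph{all} $t \in (0,1)$ simultaneously; smoothness of the curves then follows from standard continuation theory. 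Your route instead extracts the count $\apbound$ from the Huber--Sturmfels cell count over the unimodular triangulation. That is a legitimate alternative and explains combinatorially where the number comes from, but note the paper includes its own proof precisely because the lifting $\omega$ is not generic, so the off-the-shelf polyhedral-homotopy theorems cannot be cited verbatim; your appeal to the Section 5 results that $\omega$ induces a fine regular triangulation answers the combinatorial half of that objection but not the analytic half.

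The analytic half is the genuine gap. To obtain $\apbound$ curves smoothly parametrized over all of $(0,1)$ you must rule out branch points of the solution set over the \emph{entire} open interval for a single generic choice of coefficients. Your proposed fix --- exhibit one transverse parameter choice and conclude that the bad locus is a proper algebraic subset --- does not directly apply here, because ``bad'' means ``there exists some $t \in (0,1)$ at which the discriminant vanishes,'' and that locus is a union over a continuum of $t$-values, not visibly an algebraic subset of coefficient space. The paper closes this gap with a univariate argument: substituting $a_{ijk}^R t^{\omega_{ij}}$ into the discriminant $D$ of~\cref{rmk:generic} yields a univariate polynomial $g(t)$ with $g(1) \neq 0$, hence $g \not\equiv 0$, so it has only finitely many zeros in the unit disk; rotating the $t$-segment in the complex plane to avoid the arguments of those zeros keeps $g$ nonvanishing on $(0,1)$, and this rotation is absorbed into a nonsingular rescaling of the coefficients $a_{ijk}^R$, which preserves genericity. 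You need this (or an equivalent) uniform-in-$t$ step; the rest of your sketch is consistent with the standard theory the paper also invokes.
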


This is a special version of the smoothness condition 
for the polyhedral homotopy,
and its proof can be found in~\cite{huber_polyhedral_1995,Li2003Numerical}.
Here we include a variation of the proof adapted from~\cite{Li2003Numerical} for completeness,
as the special choice of the lifting function prevents
us from directly applying the general theorems which require generic liftings.

\begin{proof}
    As proved in~\cite{ChenDavisMehta2018}, for generic choices of the parameters
    the system $H_{C_N}(\boldx,1) \equiv F_{C_N}^R$ is in general position
    with respect to the adjacency polytope bound,
    i.e., it has the maximum number of isolated complex solutions.
    
    Also note that for any $t \ne 0$, 
    $H(\boldx,t)$ is has the same form as~\eqref{equ:kuramoto-rand}
    since the effect of $t$ is only in the scaling the coefficients.
    We shall show that $H(\boldx,t)$ remains a generic member 
    of~\eqref{equ:kuramoto-rat} for all $t \in (0,1]$ and hence
    the all complex solutions of $H(\boldx,t) = \boldzero$ 
    (as a system in $\boldx$ only) are isolated and the total number
    matches the adjacency polytope bound $\apbound$.
    
    As noted in~\cref{rmk:generic}, the genericity condition is characterized by
    an algebraic function $D$, the discriminant, which is a polynomial in the coefficients
    $c_k^R$ and $a_{ijk}^R t^{\omega_{ij}}$ for $k=1,\dots,n$ and  $\{i,j\} \in \edges(C_N)$
    such that $F(\boldx) = H_{C_N}(\boldx,t)$ is generic with respect to the 
    adjacent polytope bound precisely when $D \ne 0$.
    Consider the univariate polynomial 
    \[
        g(t) = D((c_i^R)_{i=1}^n,(a_{ijk}^R t^{\omega_{ij}})_{\{i,j\} \in \edges(C_N),k=1,\dots,n}).
    \]
    By our genericity assumption, $g(1) \neq 0$,
    and therefore the polynomial $g(t)$ is not the zero polynomial.
    It then has finitely many zeros within the unit disk of $\C$, say, 
    $r_1 e^{\tau_1},\dots,r_\ell e^{\tau_\ell}$ for some $\ell \in \Z^+$.
    Picking a real value $\tau \in [0,2\pi]$ such that 
    $\tau \neq \tau_k$ for $k=1,\dots,\ell$ will ensure
    $g( e^\tau t ) \neq 0$ for all $t \in (0,1)$.
    But $g( e^\tau t)$ describes the discriminant condition for the system
    \begin{equation*}
        H_{{C_N},k}(x_1,\dots,x_n, e^\tau t) = 
        c_k^R - 
        \sum_{\{i,j\} \in \mathcal{E}(C_N)} 
        \left(
            a_{ijk}^R \frac{x_i}{x_j} + 
            a_{jik}^R \frac{x_j}{x_i}
        \right) 
        e^{\omega_{ij} \tau} t^{\omega_{ij}},
        \quad\text{for } k=1,\dots,n
    \end{equation*}
    which implies $H_{C_N}(\boldx, e^\tau t)$ is in general position
    for all $t \in (0,1)$.
    
    Notice that the map $\tau \mapsto e^{\omega_{ij} \tau}$ is finite-to-one,
    and the map $a_{ijk}^R \mapsto a_{ijk}^R \cdot e^{\omega_{ij} \tau}$
    is a nonsingular linear transformations on the coefficients $\{ a_{ijk}^R \}$,
    which preserves genericity.
    We can conclude that for generic choices of the coefficients,
    $H_{C_N}(\boldx,t)$ will be in general position for $t \in (0,1)$.
    
    This shows that at any fixed $t \in (0,1)$,
    all solutions of $H(\boldx,t) = \boldzero$ in $\C^n$ are isolated
    and the total number is exactly the adjacency polytope bound $\apbound$.
    A direct application of the homotopy continuation theory
    is then sufficient to establish that the solution set of $H(\boldx,t) = \boldzero$
    in $\C^n \times (0,1)$ forms paths that are smoothly parametrized by $t$.
    Furthermore, by continuity, the limit points of these paths as $t \to 1$
    must be all the solutions of $F_{C_N}^R(\boldx) = \boldzero$
    which is identical to that of $F_{C_N}(\boldx) = \boldzero$.
\end{proof}

The equation $H_{C_N}(\boldx,t) = \boldzero$ defines finitely many
smooth paths in $\C^n \times (0,1)$ reaching all of the isolated complex solutions
of the target synchronization system $F_{C_N}(\boldx) = \boldzero$.
The starting point of these paths at $t=0$, however, 
cannot be determined directly since $H_{C_N}(\boldx,0) = (c_1,\dots,c_n)$
which has no root in $\C^n$.
This obstacle is surmounted via a technique similar to the main construction 
in polyhedral homotopy~\cite{huber_polyhedral_1995}.

Recall that $\Delta_N$ is the set of cells forming the
unimodular triangulation of the adjacency polytope $P_{C_N}$
(defined in~\eqref{equ:triangulation-even} or~\eqref{equ:triangulation-even}
depending on the parity of $N$).
For each cell $T \in \Delta_N$, we define the subset of (directed) edges
\[
    \edges(T) = \{ (i,j) \in \edges(G) \mid \bolde_i - \bolde_j \in T \}.
\]
Here, we do not assume the symmetry of edges, i.e., $(i,j) \in \edges(T)$
does not imply $(j,i) \in \edges(T)$.
Define the \emph{cell system} $F_T = (F_{T,1},\dots,F_{T,n})$
associated with the cell $T \in \Delta_N$ given by
\begin{equation}\label{equ:cell}
    F_{T,k} (\mathbf{x}) =
    c^R_k -
    \sum_{(i,j) \in \mathcal{E}(T)} a^R_{ijk} \, \frac{x_i}{x_j}
    \quad \text{ for } k=1,\dots,n.
\end{equation}
This system can be considered as a subsystem of the 
unmixed synchronization system~\eqref{equ:kuramoto-rand}
in the sense that it involves a subset of the terms in that system:
only those terms corresponding to points in $T$.
Indeed, $T$ is exactly the Newton polytope of the corresponding cell system.

\begin{remark}\label{rmk:cell-system}
    The cell systems defined here are refinements of the \emph{facet systems}
    studied in~\cite{Chen2019Directed}.
    Indeed, for odd $N$, they are exactly the facet systems since each $T \in \Delta_N$
    is the convex hull of a facet of the adjacency polytope $P_{C_N}$ together with the origin.
    For even $N$ values, however, the cell systems will be significant refinement
    of facet systems defined in~\cite{Chen2019Directed}.
    This distinction will be explained in detail in~\cref{sec:dag}.
\end{remark}

Here, each cell $T \in \Delta_N$ is a full-dimensional lattice simplex 
with normalized volume 1 (a primitive simplex).
From classical theory from toric algebraic geometry, we can deduce that
the corresponding cell system has a unique solution.

\begin{lemma}
    For generic choices of $\{c_i\}_{i=1}^n$ and $\{a_{ij}'\}_{\{i,j\} \in \mathcal{E}(C_N)}$,
    each system of Laurent polynomial equations $F_{T}(\boldx) = \boldzero$ for $T \in \Delta_N$
    has a unique complex solution, and this solution is isolated and nonsingular.
\end{lemma}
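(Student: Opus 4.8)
The plan is to count solutions in the algebraic torus $(\C^*)^n$ via Bernstein's theorem and then deduce uniqueness, isolatedness, and nonsingularity from the unimodularity of each cell. First I would observe that each cell system $F_T$ defined in~\eqref{equ:cell} is a system of $n$ Laurent polynomials in $n$ variables whose $k$-th equation has Newton polytope exactly $T$ (the same $T$ for all $k$, since the supported terms $x_i/x_j$ for $(i,j) \in \edges(T)$ together with the constant $c_k^R$ are common to every equation). Hence the system is unmixed with common Newton polytope $T$, and by the Bernstein–Khovanskii–Kushnirenko theorem the number of isolated solutions in $(\C^*)^n$, counted with multiplicity, is at most the normalized volume $n!\Vol(T)$, with equality and all solutions simple for generic coefficients. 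Since $T \in \Delta_N$ is a primitive (unimodular) simplex, $n!\Vol(T) = 1$. Therefore, for generic coefficients, $F_T$ has exactly one solution in $(\C^*)^n$, and it is nonsingular (the Jacobian is invertible there, which is part of the genericity conclusion of the BKK bound).

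The next step is to upgrade ``generic for each individual $T$'' to ``generic simultaneously for all $T \in \Delta_N$ and for the target system.'' For each $T$ the exceptional locus is contained in the vanishing set of a nonzero polynomial (a resultant/discriminant) in the coefficients $\{c_i\}$ and $\{a_{ij}'\}$; there are finitely many cells, so the union of these exceptional loci is still a proper algebraic subset, and its complement is the desired generic set. I would also note that the coefficients of $F_T$ are the $c_k^R$ and $a_{ijk}^R$, which are fixed nonsingular linear images of the original $c_i$ and $a_{ij}'$ (via the random matrix $R$), so genericity in one coordinate system is equivalent to genericity in the other; this lets me phrase the hypothesis directly in terms of $\{c_i\}$ and $\{a_{ij}'\}$ as in the statement.

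Finally I would translate the count in $(\C^*)^n$ back to a count in $\C^n$: because every monomial appearing in $F_{T,k}$ other than the constant is a ratio $x_i/x_j$, any solution with some coordinate equal to $0$ or ``$\infty$'' is excluded — more carefully, clearing denominators shows a zero of $F_T$ in $\C^n$ with a vanishing coordinate would force the constant term $c_k^R$ to vanish, contradicting genericity — so the unique torus solution is the unique solution in $\C^n$ as well, and it is isolated and nonsingular there. The main obstacle I anticipate is the bookkeeping in the second step: one must be careful that the cell system is genuinely unmixed (all $n$ Newton polytopes equal $T$, not merely contained in $T$) so that the unmixed BKK bound $n!\Vol(T)$ — rather than a mixed volume — is the right count, and one must confirm that the origin $\boldzero \in T$ really does contribute the constant term to each equation, which is exactly why $\boldzero$ was included in the point configuration $S_{C_N}$ in~\cref{rmk:pt-conf}. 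Everything else is a routine invocation of the BKK theorem and the fact that a proper algebraic subset has dense complement.
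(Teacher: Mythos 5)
Your argument is correct and is essentially the justification the paper itself gives: the paper states the lemma as a consequence of ``classical theory from toric algebraic geometry,'' i.e.\ the unmixed BKK count $n!\Vol(T)=1$ for the unimodular simplex $T$, exactly as you do, and then points to a direct algebraic proof elsewhere (a unimodular monomial change of variables turns $F_T$ into a linear system, which also supplies the witness needed to see that your genericity conditions pull back to a nonempty open set of the original parameters $\{c_i\},\{a_{ij}'\},R$, a point your ``nonsingular linear images'' phrasing slightly glosses over since that map is not a bijection). No substantive gap.
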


A direct algebraic proof is also given in~\cite[Theorem 4]{Chen2019Directed},
as the system $F_T$ corresponds to a \emph{primitive} subnetwork
which has a unique generalized frequency synchronization configuration.
More importantly, as shown in that proof (\cite[Remark 5]{Chen2019Directed},
after Gaussian elimination,
such a system can be reduced to a special form of binomial system 
which can be solved in linear time, i.e., 
in $O(n)$ complex multiplications and divisions 
with no additional memory needed.

With this result, we modify the homotopy~\eqref{equ:homotopy}
so that it defines a solution path that starts from the unique solution of 
the cell system $F_T$.
This is essentially a specialized polyhedral homotopy~\cite{huber_polyhedral_1995} 
of the ``unmixed'' form using the triangulation found in the previous section.

\begin{definition}[Adjacency polytope homotopy]\label{def:ap-homotopy}
    For each cell $T \in \Delta_N$, 
    let $(\alpha_1,\dots,\alpha_n,1) \in \R^{n+1}$
    be the associated upward pointing inner normal vector as given
    in~\cref{lem:normal-even} and~\cref{cor:normal-odd}.
    We define the \emph{adjacency polytope homotopy} induced by this cell
    as the function $H_T = (H_{T,1}, \ldots, H_{T,n}) : \C^n \times [0,1]$
    given by 
    \begin{equation}
        H_{T,k}(\mathbf{y},t) = 
        H_{C_N,k}(
            y_1 \, t^{\alpha_1} \,,
            \, \ldots, \,
            y_n \, t^{\alpha_n}
        ).
    \end{equation}
\end{definition}

Recall that the collection of cells in $\Delta_N$ form a 
regular triangulation of the adjacency polytope $P_{C_N}$ 
which is also the Newton polytope of~\eqref{equ:kuramoto-rand}.
By applying the construction of unmixed form of polyhedral homotopy~\cite{huber_polyhedral_1995}, 
we obtain the desired result:
A homotopy that can locate all isolated complex solution of the
algebraic synchronization equation~\eqref{equ:kuramoto-rat}.

\begin{theorem}
    For generic choices of $\{c_i\}_{i=1}^n$ and  $\{a_{ij}'\}_{\{i,j\} \in \mathcal{E}(C_N)}$ 
    \begin{enumerate}
        \item 
            The solution set of $H_{T} = \boldzero$ within $\C^n \times (0,1)$
            consists of a finite number of smooth paths parametrized by $t$,
            and the limit point of these paths as $t \to 1$ are precisely the isolated solutions of
            $F_{C_N}(\boldx) = \boldzero$ in $\C^n$.
        \item
            Among them, there is a unique path $C_T(t)$ whose limit point $C_T(0) = \lim_{t \to 0^+} C_T(t)$ 
            is the unique solution of $F_{T}(\boldx) = \boldzero$.
        \item
            The set of end points $\{ C_T(1) \mid T \in \Delta_N \}$
            of paths induced by all cells is exactly the isolated $\C$-solution set of
            $F_{C_N}(\boldx) = \boldzero$.
    \end{enumerate}
\end{theorem}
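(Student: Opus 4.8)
The plan is to reduce the statement to the standard theory of the unmixed polyhedral homotopy, whose full development appears in~\cite{huber_polyhedral_1995,Li2003Numerical}, and then to carefully check that the three specific lifting functions~\eqref{equ:weights-even} and~\eqref{equ:weights-odd}, although not generic, still satisfy the hypotheses needed for that theory to apply. The key observation driving everything is that $\Delta_N$ is a \emph{regular, unimodular} triangulation of the point configuration $S_{C_N}$, and $P_{C_N}=\conv S_{C_N}$ is precisely the Newton polytope of the unmixed system~\eqref{equ:kuramoto-rand}, so the adjacency polytope bound $\apbound$ equals both the normalized volume $n!\Vol(P_{C_N})$ and the number of full-dimensional cells of $\Delta_N$ (each having normalized volume $1$).

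First I would establish part~(1). Fix a cell $T\in\Delta_N$ with upward-pointing inner normal $(\alpha_1,\dots,\alpha_n,1)$ from~\cref{lem:normal-even} or~\cref{cor:normal-odd}. Substituting $x_i = y_i\,t^{\alpha_i}$ into $H_{C_N,k}$ multiplies each monomial $\frac{x_i}{x_j}t^{\omega_{ij}}$ by $t^{\alpha_i-\alpha_j+\omega_{ij}}$; by the defining property of the inner normal, $\alpha_i-\alpha_j+\omega_{ij}=0$ exactly for the edges $(i,j)\in\edges(T)$ and is strictly positive for all other edges. Hence $H_{T,k}(\mathbf y,t)$, after this reparametrization, is a polynomial in $t$ with nonnegative exponents, and $H_{T,k}(\mathbf y,0)=F_{T,k}(\mathbf y)$ is exactly the cell system~\eqref{equ:cell}. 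For $t\in(0,1]$ the map $\mathbf y\mapsto(y_1t^{\alpha_1},\dots,y_nt^{\alpha_n})$ is a bijection of $(\C^*)^n$ onto itself, so the solutions of $H_T(\cdot,t)=\boldzero$ correspond bijectively to those of $H_{C_N}(\cdot,t)=\boldzero$; by the preceding Proposition the latter consists, for generic parameters, of $\apbound$ isolated nonsingular points for each such $t$, forming smooth paths whose $t\to1$ limits are the isolated solutions of $F_{C_N}$. This gives part~(1).

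Next, part~(2). By the previous Lemma, for generic parameters the cell system $F_T(\mathbf y)=\boldzero$ has a unique, isolated, nonsingular solution $\mathbf y^{(0)}$; since $H_T(\mathbf y,0)=F_T(\mathbf y)$ and the Jacobian $D_{\mathbf y}H_T$ is nonsingular there, the implicit function theorem extends $\mathbf y^{(0)}$ to a unique smooth path $C_T(t)$ for small $t>0$, and the path-tracking/compactness argument of~\cite{Li2003Numerical} shows this path continues without divergence on all of $(0,1)$; its $t\to1$ limit $C_T(1)$ is an isolated solution of $F_{C_N}$. The main obstacle — and the crux of the whole theorem — is part~(3): showing the $\apbound$ endpoint-paths $\{C_T(\cdot):T\in\Delta_N\}$, one per cell, are \emph{distinct} and account for \emph{all} isolated solutions. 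The counting half is immediate: there are exactly $\apbound$ cells, and each contributes at least one path, while part~(1) says the total solution count at any fixed $t$ is exactly $\apbound$; so it suffices to show the paths $C_T(\cdot)$ for distinct cells $T$ are disjoint. For this I would argue that two distinct cells $T\neq T'$ give solution curves with different valuations: near $t=0$, writing $C_T(t)=(y_1^{(T)}(t)t^{\alpha_1},\dots)$ with $y^{(T)}(0)$ finite and nonzero, the leading $t$-exponent vector of $C_T$ is the vertex of the regular subdivision's normal fan dual to $T$, and since $\Delta_N$ is a triangulation these exponent vectors are pairwise distinct across cells; hence the curves cannot coincide. This is exactly the mechanism by which the unmixed polyhedral homotopy of~\cite{huber_polyhedral_1995} recovers all mixed-volume-many roots, and the only thing to verify is that our non-generic integer lifting still induces a genuine triangulation (established in~\cref{sec:triangulation}) and that genericity of the coefficients — preserved under the $t^{\alpha_i}$ scaling, which is a monomial change of coordinates — suffices for the nonsingularity and no-divergence statements, which was handled in the proof of the preceding Proposition. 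Assembling these pieces yields all three claims.
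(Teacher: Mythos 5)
Your proposal is correct and takes essentially the same route as the paper, which establishes this theorem by directly invoking the unmixed polyhedral homotopy construction of Huber and Sturmfels together with the regularity and unimodularity of $\Delta_N$, the smoothness proposition for $H_{C_N}$, and the lemma on unique nonsingular solutions of the cell systems. Your write-up simply makes explicit the standard argument the paper cites: the lower-facet normal turns $H_T(\cdot,0)$ into the cell system, the implicit function theorem launches one path per cell, and the valuation/leading-exponent argument plus the count of $\apbound$ cells shows these paths are distinct and exhaust all solution curves.
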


Compared to an direct application of the unmixed form of the polyhedral homotopy,
the above construction has great computational advantages as summarized in~\cref{tab:benefits}.

\begin{remark}\label{rmk:numerical}
    From the viewpoint of numerical analysis, the stability of a homotopy formulation is a
    deep and complex problem that is outside the scope of this paper.
    Here we only comment one distinct advantage of the adjacency polytope homotopy
    over a direct application of polyhedral homotopy method.
    In practical implementations of polyhedral homotopy, 
    it is well known that the distribution of the exponents of the $t$ parameter in
    the homotopy plays a crucial role in the numerical stability of the 
    homotopy algorithm~\cite{GaoLiVerscheldeWu2000Balancing}.
    In particular, if the exponents of $t$ spread over a wide range, the problem of
    tracking the homotopy paths can become extremely ill-conditioned 
    and standard algorithms for path tracking become unstable.
    While many techniques have been developed to deal with this issue,
    it is much preferred if this problem can be avoided in the first place.
    In our construction, the exponents of $t$ in both~\eqref{equ:homotopy}
    and~\cref{def:ap-homotopy} involve small integers $\{0,1,2\}$,
    this ensures that the exponents of $t$ in $H_T(\boldy,t)$ consist of only small 
    positive integers for relatively small $N$ values.
\end{remark}


\begin{table}[ht]
    \centering
    \begin{tabular}{ccc}
        \toprule
        & 
        \makecell{Direct application of\\{}polyhedral homotopy} & 
        \makecell{Adjacency polytope\\{}homotopy} \\ \midrule
        \makecell{Construction of\\{}the homotopy} & 
        \makecell{Requires the costly step of\\{}mixed cells computation} &
        \makecell{Each homotopy $H_T$\\{}only requires one cell\\{}generated by~\cref{alg:cells}} \\ \midrule
        \makecell{Starting systems} &
        \makecell{Binomial systems which are\\{}usually solved in $O(n^3)$ time\\{}and additional memory} &
        \makecell{Special ``primitive'' system\\{}that can be solved in $O(n)$\\{}time and requires no memory}\\ \midrule
        \makecell{Lifting function} &
        \makecell{Use random image and requires\\{}numerical conditioning techniques} &
        \makecell{Use values $\{0,1,2\}$ and will\\{}not directly cause instability} \\
        \bottomrule\\
    \end{tabular}
    \caption{Computational advantages of the adjacency polytope homotopy}
    \label{tab:benefits}
\end{table}



\section{Interpretations}\label{sec:interpretation}

In this section we interpret our main results in a wider context
and draw connections to closely related problems.
Even though the main goal is to construct an efficient homotopy method for 
locating complex synchronization configurations for Kuramoto networks supported on cycle graphs,
we shall show that our construction actually provides explicit solutions
to other problems: 
the direct acyclic decomposition of cycle Kuramoto networks,
and the self-intersection of a tropical hypersurface.

\subsection{Direct acyclic decomposition of cycle networks}\label{sec:dag}

In the recent work~\cite{Chen2019Directed}, a general scheme is proposed to decompose
a Kuramoto network into smaller subnetworks supported by direct acyclic graphs
while preserving certain properties of the synchronization configurations.
This scheme utilize the geometric properties of the adjacent polytope.
Indeed, the subnetworks are in one-to-one correspondence with the
facets of the adjacency polytope.

In this context, the constructions proposed in this paper
provide two important improvement to that decomposition scheme.
First, the regular unimodular triangulation of the adjacency polytope $P_{C_N}$
gives rise to a significant refinement for the direct acyclic decomposition scheme
which will decompose a cycle network into ``primitive'' subnetworks
that can be analyzed easily and exactly.
This was not possible for even $N$ values with the original decomposition scheme.
Second, as the starting system induced by the adjacency polytope homotopy~\eqref{equ:homotopy}
can be solve easily and efficiently.
Finally, since the explicit formula for the generic root count is known (\cref{sec:ap}),
the number of solution paths induced by the homotopy proposed in~\cref{sec:homotopy} 
matches the generic root count exactly,
and thus will not produce extraneous solution paths in the generic situation.
This feature was not established in the original decomposition scheme
and the associated homotopy construction.

To see the regular unimodular triangulation described in~\cref{sec:triangulation}
give rises to a decomposition of the Kuramoto network, 
we first define the subnetwork corresponds to a cell.

\begin{definition}[Directed acyclic subnetwork]
    Let $\Delta_N$ be the regular unimodular triangulation of $P_{C_N}$
    defined in~\eqref{equ:triangulation-even} and~\eqref{equ:triangulation-odd}.
    For each cell $T \in \Delta_N$, we define the \emph{directed acyclic subnetwork} 
    associated with $T$ to be the graph $(\{0,\dots,N-1\},\edges(T))$ where
    \[
        \edges(T) = \{ (i,j) \in \edges(C_N) \mid \bolde_i - \bolde_j \in T \}.
    \]
\end{definition}

This is a refinement of the definition given in~\cite{Chen2019Directed}
where subnetworks correspond to facets of the adjacency polytope.
In contrast, subnetworks defined above come from a triangulation
which, in the case of even $N$ values, are associated with simplices in the facets of $P_{C_N}$.

As established in~\cite{Chen2019Directed}, such a subnetwork associated with a cell
is always an \emph{acyclic} graph which justifies its name (directed acyclic subnetwork).
Moreover, such subnetworks are of the simplest possible form
known as ``primitive'' subnetworks.

\begin{definition}[Primitive directed acyclic subnetwork]
    A subnetwork associated with a cell, as defined above, 
    is said to be \emph{primitive} if it contains exactly $n = N-1$ directed edges.
\end{definition}

Primitive subnetworks may be considered the building blocks of a Kuramoto network
as the are the smallest directed acyclic subnetworks that weakly connected and contains all nodes.
More importantly, its synchronization configurations can be analyzed easily and exactly.
Note that since each cell $T \in \Delta_N$ is a simplex of dimension $n$
and contains exactly $n$ nonzero points of the form $\bolde_i - \bolde_j$ for $i \ne j$,
we can see the induced subnetwork must be primitive.

\begin{proposition}
    Let $\Delta_N$ be the regular unimodular triangulation of $P_{C_N}$.
    For each cell $T \in \Delta_N$, the associated directed acyclic subnetwork 
    is primitive.
\end{proposition}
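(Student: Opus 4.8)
The plan is to convert the statement into a count of the lattice points of $T$. By the definition just given, the subnetwork attached to $T$ is primitive precisely when $|\edges(T)| = n$, and since $\edges(T) = \{(i,j) \in \edges(C_N) \mid \bolde_i - \bolde_j \in T\}$, it will suffice to show that $T$ contains exactly $n$ of the vectors $\bolde_i - \bolde_j$ with $\{i,j\} \in \edges(C_N)$, together with the observation that distinct such vectors name distinct directed edges. I would carry this out in three steps: pin down $T \cap S_{C_N}$, locate $\boldzero$ within it, and then count what remains.

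First I would use unimodularity. A cell $T \in \Delta_N$ is a full-dimensional lattice simplex of normalized volume $1$, and such a simplex contains exactly $n+1$ lattice points, namely its vertices. Because $\Delta_N$ is a triangulation of the \emph{point configuration} $S_{C_N}$, these vertices all lie in $S_{C_N} \subseteq \Z^n$; hence $T \cap S_{C_N} = T \cap \Z^n$ equals the vertex set of $T$, a set of size $n+1$.

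Second I would check that $\boldzero \in T$ for \emph{every} cell $T$, which by the previous step forces $\boldzero$ to be a vertex of $T$. This I would read directly off the explicit descriptions of $\Delta_N$: for odd $N$ every cell equals $G^{\mathbf{0}}_{\boldlambda} = \conv\{\boldzero, G_{\boldlambda}\}$ as in~\eqref{equ:triangulation-odd} and so contains $\boldzero$ by construction; for even $N$ every cell lies in some $\Delta_+(G^{\mathbf{0}}_{\boldlambda})$ as in~\eqref{equ:triangulation-even} and has the form $\conv\{V_N \setminus \{\boldv_i\}\}$ with $\lambda_i = \lambda_1$, and since that condition requires $i \geq 1$ the apex $\boldv_0 = \boldzero$ is never the vertex removed. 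Thus the $n$ vertices of $T$ other than $\boldzero$ are distinct members of $S_{C_N} \setminus \{\boldzero\}$, i.e.\ distinct vectors of the form $\bolde_i - \bolde_j$ with $\{i,j\} \in \edges(C_N)$.

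Finally, using the convention $\bolde_0 = \bolde_{n+1} = \boldzero$, an equality $\bolde_i - \bolde_j = \bolde_k - \bolde_l$ among edge-difference vectors forces $(i,j) = (k,l)$, so the assignment $\bolde_i - \bolde_j \mapsto (i,j)$ is injective and the $n$ non-origin vertices of $T$ contribute exactly $n$ distinct directed edges to $\edges(T)$; therefore $|\edges(T)| = n = N-1$ and the subnetwork is primitive. The only step that requires genuine care is the second one: unimodularity by itself does not guarantee $\boldzero$ belongs to a given cell — a general unimodular triangulation of $P_{C_N}$ can perfectly well have cells that avoid $\boldzero$, and the subnetwork of such a cell carries $n+1$ edges rather than $n$ — so one must invoke the specific pyramidal-over-$\boldzero$ structure of $\Delta_N$, and in the even case the explicit form of the two triangulations $\Delta_\pm$ of each pyramid $G^{\mathbf{0}}_{\boldlambda}$.
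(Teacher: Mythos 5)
Your proof is correct and follows essentially the same route as the paper, which disposes of the proposition in one sentence by noting that each cell is an $n$-simplex containing exactly $n$ nonzero points of the form $\bolde_i - \bolde_j$. Your added care in verifying that $\boldzero$ is a vertex of \emph{every} cell (pyramidal structure in the odd case, and the fact that $\boldv_0$ is never the removed vertex in $\Delta_+(G^{\boldzero}_{\boldlambda})$ in the even case) fills in a detail the paper leaves implicit, but it is the same argument.
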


\Cref{fig:dac-cycle4} shows the direct acyclic subnetworks of a cycle network with 4 nodes
induced by the triangulation $\Delta_N$.
All subnetworks are primitive.
In contrast, the original decomposition scheme, shown in~\cref{fig:dac-cycle4-old}, 
produces subnetworks that are not primitive.

\begin{figure}[ht]
    \centering
    \begin{tabular}{ccccc}
        \includegraphics[width=0.15\textwidth]{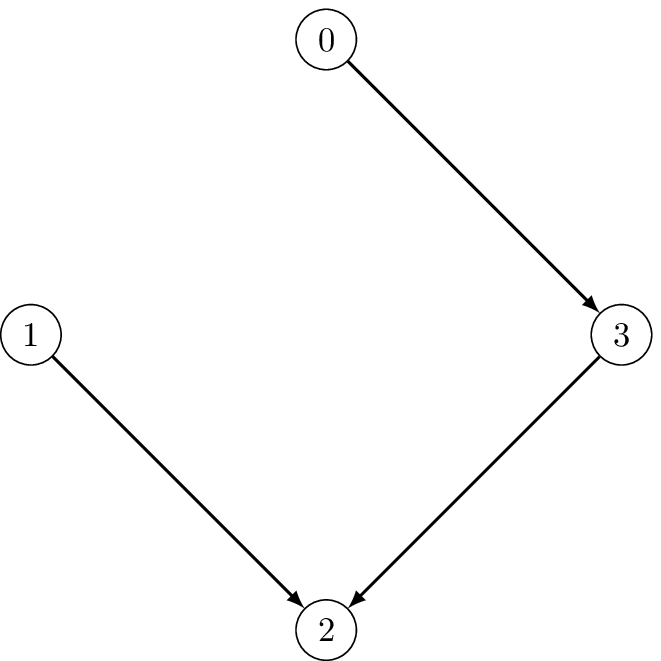} &  
        \includegraphics[width=0.15\textwidth]{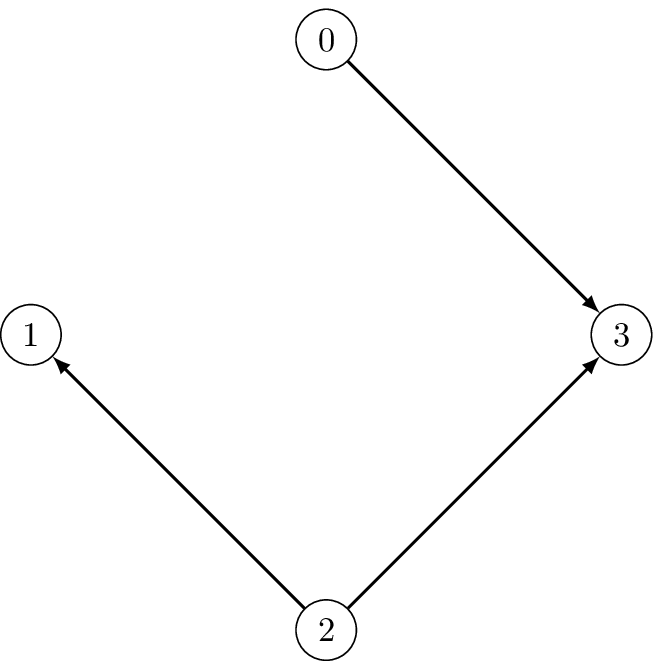} &  
        \includegraphics[width=0.15\textwidth]{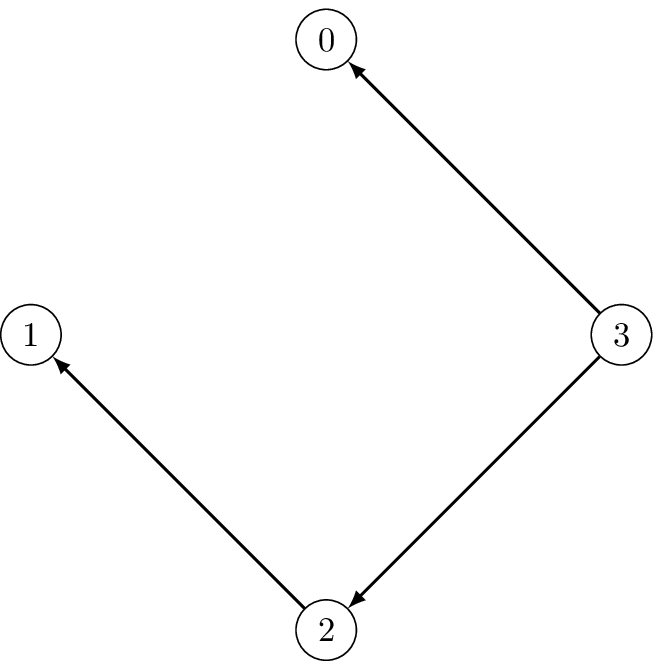} &  
        \includegraphics[width=0.15\textwidth]{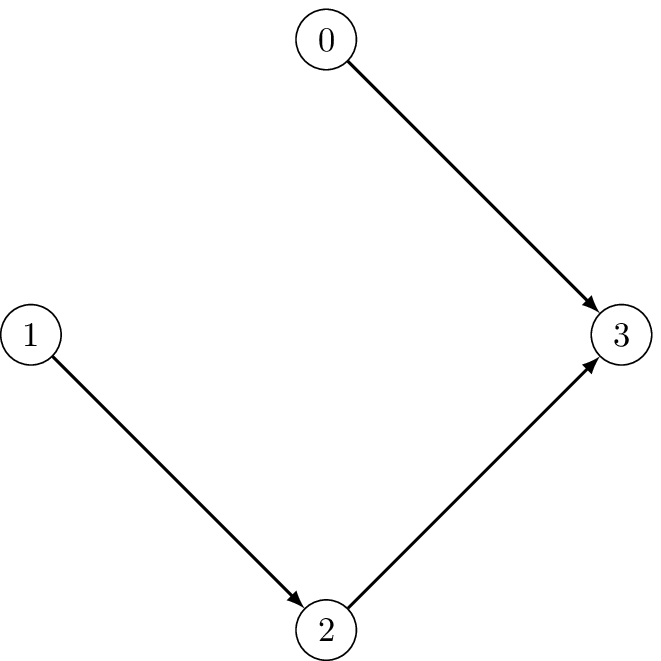} &  
        \\[1ex]
        \includegraphics[width=0.15\textwidth]{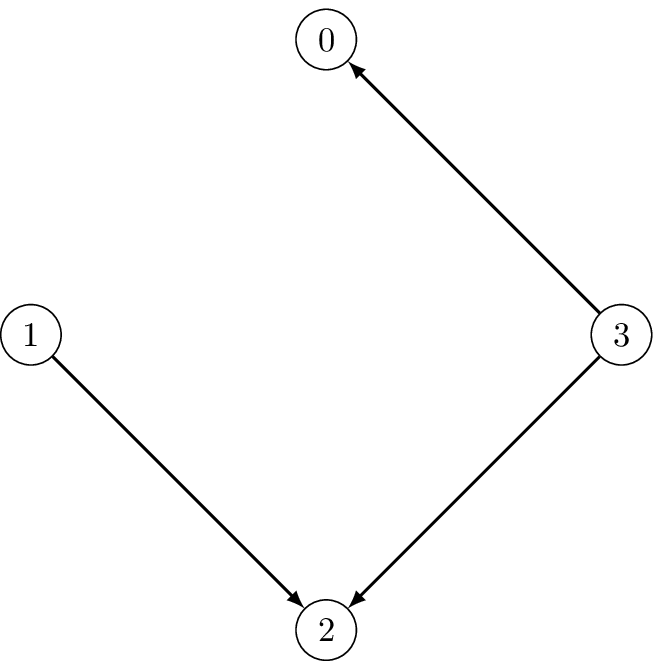} &  
        \includegraphics[width=0.15\textwidth]{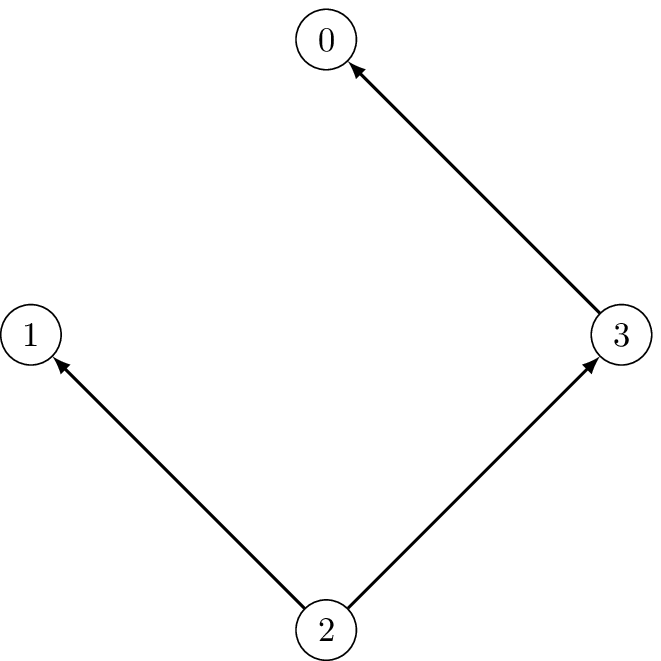} &  
        \includegraphics[width=0.15\textwidth]{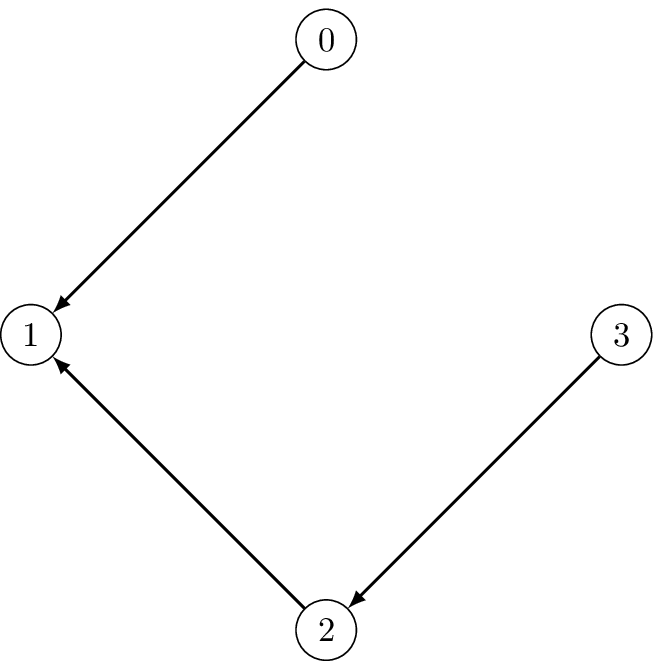} &  
        \includegraphics[width=0.15\textwidth]{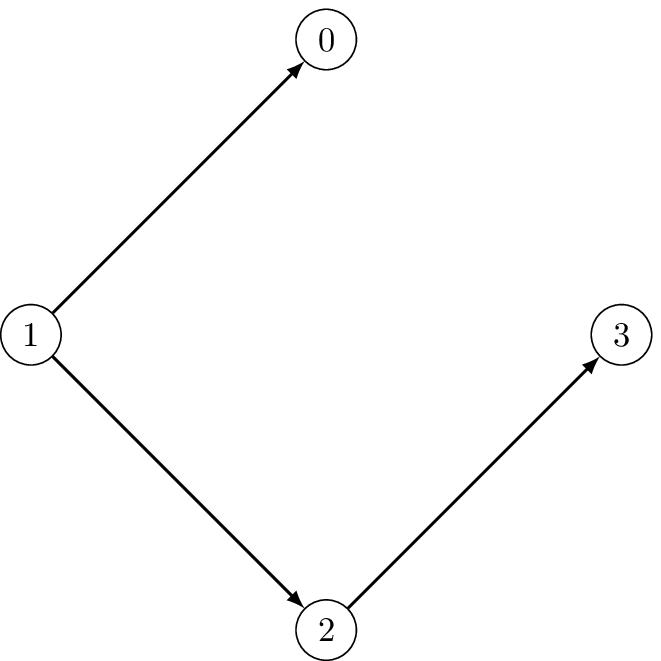} &  
        \\[1ex]
        \includegraphics[width=0.15\textwidth]{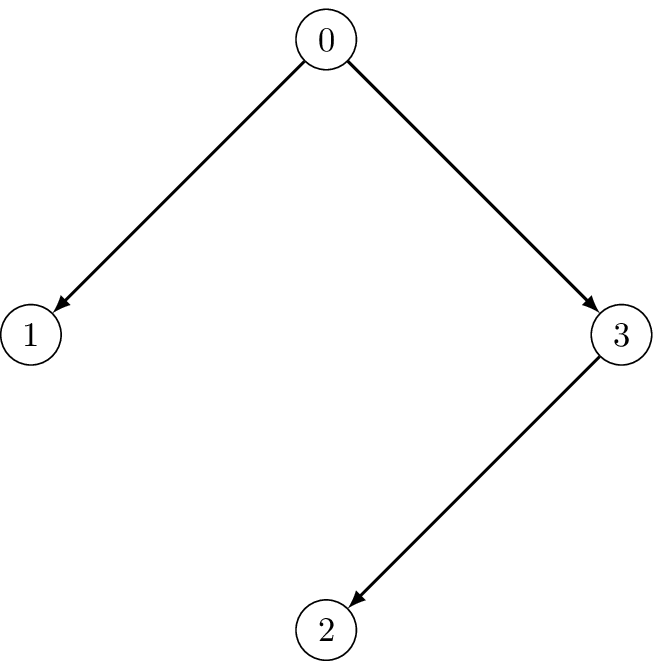} &  
        \includegraphics[width=0.15\textwidth]{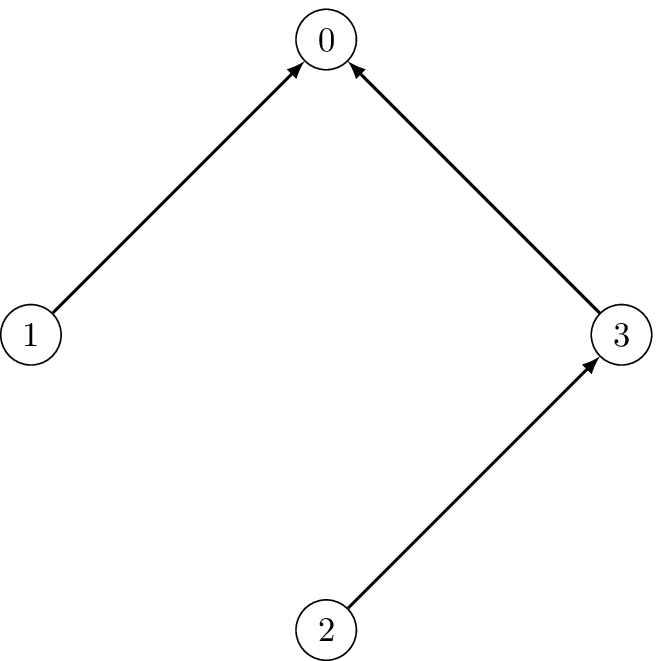} &  
        \includegraphics[width=0.15\textwidth]{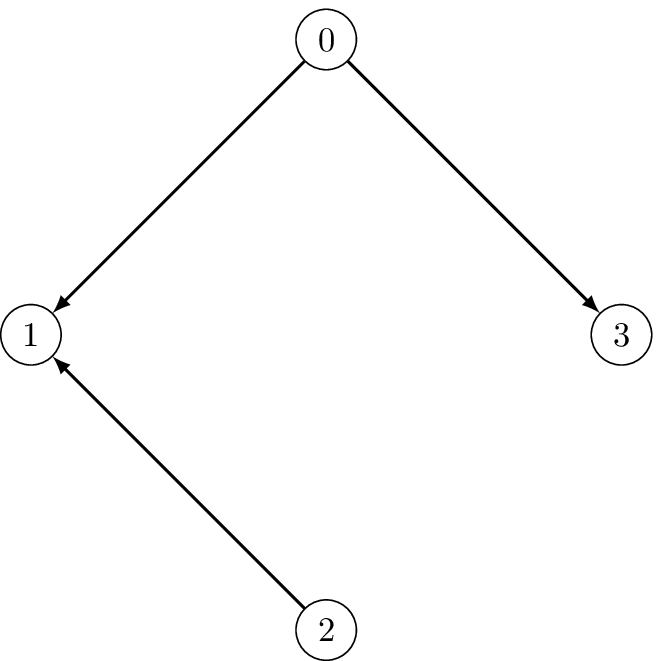} &  
        \includegraphics[width=0.15\textwidth]{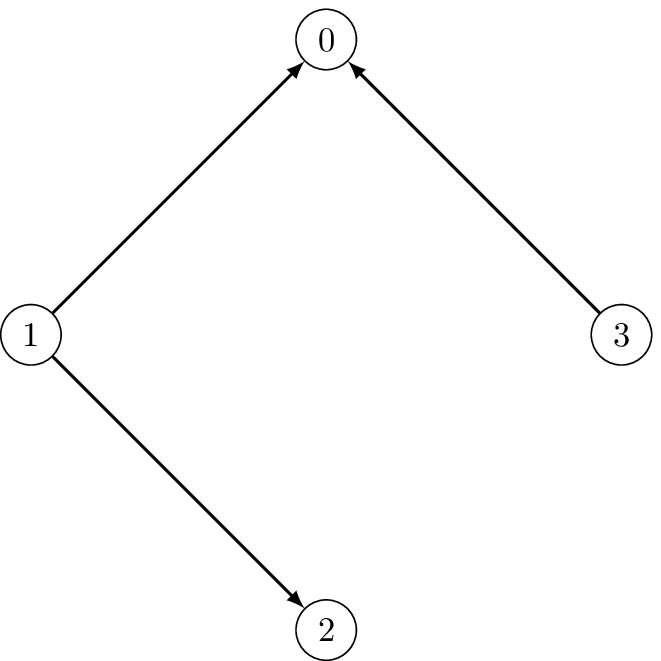} &  
    \end{tabular}
    \caption{
        Direct acyclic subnetworks of a cycle network with 4 nodes
        induced by the most refined decomposition scheme developed in this paper.
        Every subnetwork is \emph{primitive}.
        This is to be compared with the original coarser decomposition scheme
        shown in~\cref{fig:dac-cycle4-old}.
    }
    \label{fig:dac-cycle4}
\end{figure}

\begin{figure}[ht]
    \centering
    \begin{tabular}{cccc}
        \includegraphics[width=0.22\textwidth]{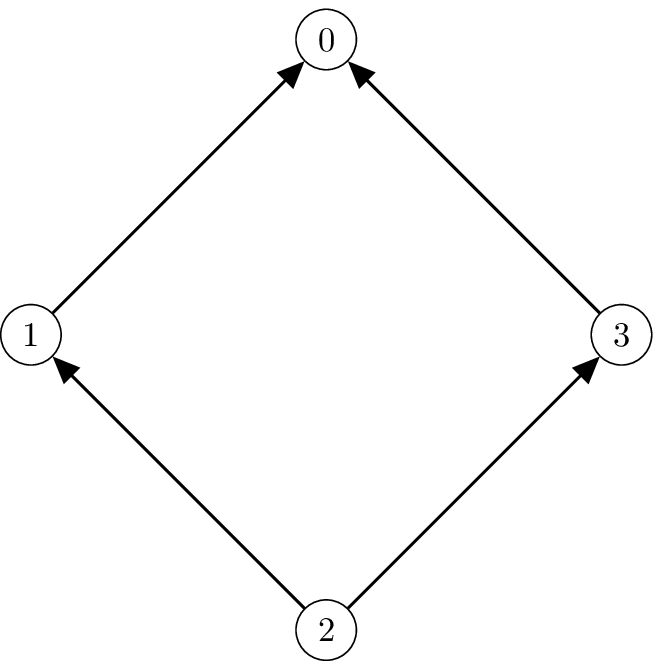} &  
        \includegraphics[width=0.22\textwidth]{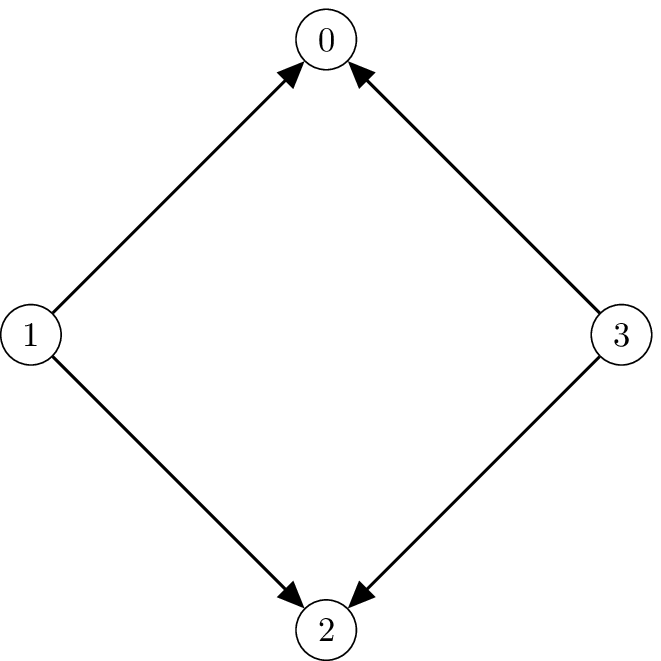} &  
        \includegraphics[width=0.22\textwidth]{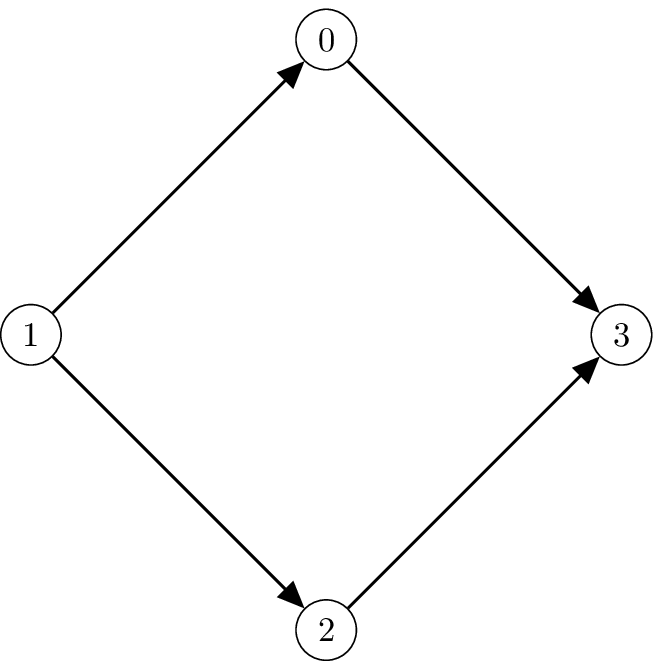} &  
        \\[1ex]
        \includegraphics[width=0.22\textwidth]{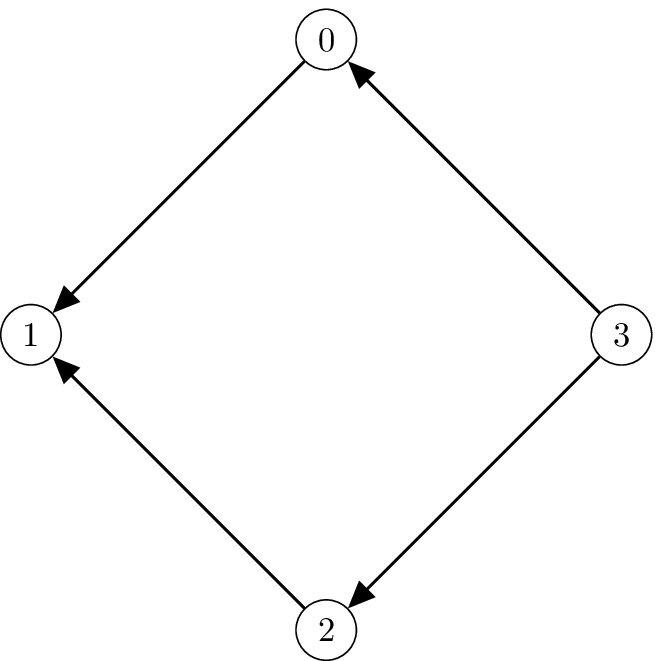} &  
        \includegraphics[width=0.22\textwidth]{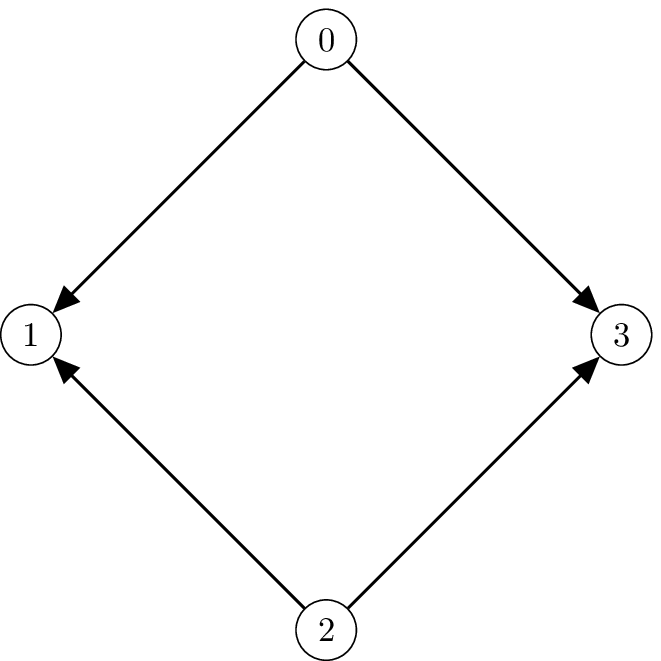} &  
        \includegraphics[width=0.22\textwidth]{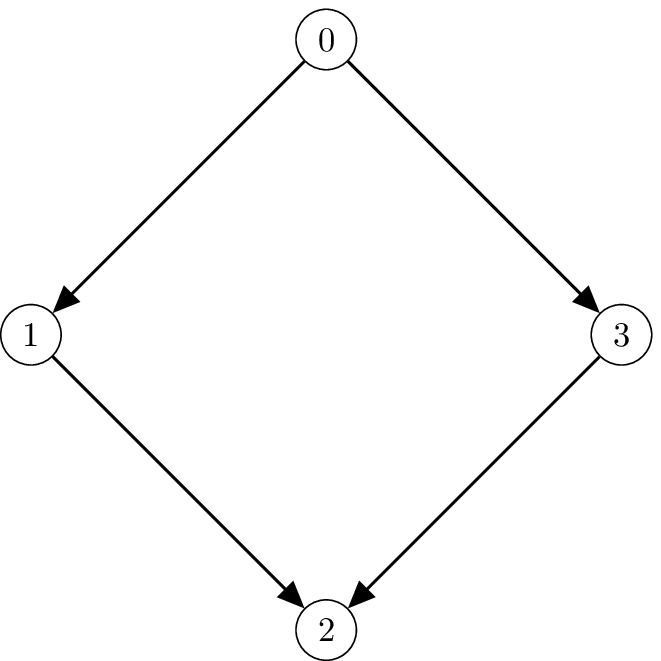} &  
    \end{tabular}
    \caption{
        Direct acyclic subnetworks of a cycle network with 4 nodes
        induced by the coarser decomposition scheme
        originally proposed in~\cite{Chen2019Directed}.
        None of the subnetworks are primitive.
    }
    \label{fig:dac-cycle4-old}
\end{figure}

\subsection{A tropical interpretation}\label{sec:tropical}

Even though it was not stated explicitly, the procedure that resulted in
the adjacency polytope homotopy~\eqref{equ:homotopy} is actually rooted from
tropical algebraic geometry~\cite{MaclaganSturmfels2015Introduction}.
In this section, we provide the interpretation from the tropical viewpoint.

Recall that we started with the unmixed form of 
the algebraic synchronization equation~\eqref{equ:kuramoto-rand}.
If we consider the valuation on the field of coefficients given by
\begin{align}\label{equ:valuation}
    \operatorname{val}(c_i^R) &= 0
    &&\text{and} &
    \operatorname{val}(a_{ijk}^R) &=
    \begin{cases}
        2 & \text{if $N$ is even and } (i,j) = (1,0) \\
        1 & \text{otherwise},
    \end{cases}
\end{align}
which mirrors the choices of the weights given in~\eqref{equ:weights-even} and~\eqref{equ:weights-odd},
then the tropicalization of the $n$ polynomials in~\eqref{equ:kuramoto-ode}
are identical and they define a common tropical hypersurface.
The main results developed in~\cref{sec:triangulation} can thus be interpreted tropically:
The valuation defined above induces the simplest (stable) self-intersection.

\begin{proposition}
    Let $h = \operatorname{trop}(F_{C_N,k}^R)$ for $k = 1,\dots,n$
    be the tropicalization of~\eqref{equ:kuramoto-rand} with respect to
    the valuation given in~\eqref{equ:valuation}.
    Then the tropical hypersurface defined by $h$ has exactly
    $\apbound$ self-intersection points,
    and each intersection is of multiplicity one.
\end{proposition}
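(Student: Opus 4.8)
The plan is to reduce the statement to the triangulation results established in \cref{sec:triangulation} by invoking the standard dictionary between stable intersections of tropical hypersurfaces, mixed subdivisions of Newton polytopes, and polyhedral homotopy. First I would observe that since the $n$ polynomials $F_{C_N,1}^R,\dots,F_{C_N,n}^R$ all have the same support $S_{C_N}$ and the valuation in~\eqref{equ:valuation} matches the lifting function $\omega$ in~\eqref{equ:weights-even} or~\eqref{equ:weights-odd}, the $n$ tropical hypersurfaces $\operatorname{trop}(F_{C_N,k}^R)$ coincide; call the common hypersurface $\mathcal{T}$. The stable self-intersection $\mathcal{T}\cap_{\mathrm{st}}\cdots\cap_{\mathrm{st}}\mathcal{T}$ ($n$ copies) is then computed, by the structure theorem for stable intersections (see~\cite{MaclaganSturmfels2015Introduction}), via the \emph{unmixed} coherent mixed subdivision of the Minkowski sum $n\cdot P_{C_N}$ induced by taking all $n$ summands to have the lifting $\omega$: the stable intersection points are in bijection with the mixed cells of that subdivision, and the multiplicity of each is the normalized volume of the corresponding mixed cell.

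The second step is to identify those mixed cells. For an unmixed lifting, the cells of the coherent mixed subdivision of $n\cdot P_{C_N}$ that are \emph{mixed} (i.e., Minkowski sums $n\cdot\sigma$ of a single $n$-dimensional cell $\sigma$ of the induced subdivision of $P_{C_N}$) correspond exactly to the full-dimensional cells of the regular subdivision of $P_{C_N}$ induced by $\omega$. By the Theorem in \cref{sec:triangulation} (even $N$) and the Proposition in \cref{sec:triangulation} (odd $N$), this regular subdivision is precisely the \emph{unimodular triangulation} $\Delta_N$, so every full-dimensional cell is a primitive simplex $T\in\Delta_N$ of normalized volume $1$. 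Hence the mixed cells are exactly $\{\,n\cdot T : T\in\Delta_N\,\}$, each of normalized (mixed) volume $1$, giving one stable intersection point of multiplicity one per cell of $\Delta_N$.

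Finally I would count: the number of cells in $\Delta_N$ equals the normalized volume of $P_{C_N}$ (since the triangulation is unimodular), which is the adjacency polytope bound $\apbound$ by the formula of~\cite{ChenDavisMehta2018} recalled in \cref{sec:ap}. Combining the three steps yields exactly $\apbound$ self-intersection points, each of multiplicity one. The main obstacle I anticipate is making the first step fully rigorous in the degenerate setting where all $n$ hypersurfaces are \emph{equal} rather than generic translates: one must be careful that the \emph{stable} intersection (the limit of $\mathcal{T}\cap(\mathcal{T}+v_1)\cap\cdots\cap(\mathcal{T}+v_{n-1})$ as the generic perturbation vectors $v_i\to 0$) is well-defined and is still computed by the unmixed mixed subdivision; this is exactly the tropical shadow of the fact, proved in \cref{sec:homotopy}, that the unmixed polyhedral homotopy with lifting $\omega$ has exactly $\apbound$ paths with distinct toric starting cells, so I would lean on that already-established result to certify that the unmixed subdivision indeed records the stable intersection and that no cell of $\Delta_N$ is "absorbed" in the limit.
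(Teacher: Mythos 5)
Your proposal is correct and follows exactly the route the paper intends: the paper states this proposition with no written proof, presenting it as the direct tropical reinterpretation of the regular unimodular triangulation theorem via the standard dictionary (stable self-intersection points $\leftrightarrow$ full-dimensional cells of the regular subdivision dual to the tropical hypersurface, multiplicities $\leftrightarrow$ normalized volumes), which is precisely the argument you spell out. Your closing caveat about the degenerate unmixed stable intersection is well placed, and your resolution --- that each stable intersection point is the vertex of $\mathcal{T}$ dual to a cell $T \in \Delta_N$ with multiplicity equal to its normalized volume, here $1$ --- is the correct way to discharge it.
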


As discussed in~\cref{rmk:no-01}, the special choice of the valuation~\eqref{equ:valuation}
is an important condition for this result to hold.
Using only 0-1 valuations, for example, will not produce self-intersections
with multiplicity one.
One of the key contribution of this paper is the explicit formula for
these self-intersection points.
These tropical self-intersection points are precisely the tropicalizations 
of the curves defined by~\eqref{equ:homotopy}.

\subsection{Equivalence of interpretations}

With the above interpretations, we have the equivalence of three important class
of problems from different context, as shown in~\cref{tab:dictionary}.
This connection, especially the connection between regular subdivision of adjacency polytope
and direct acyclic decomposition was first proposed in~\cite{Chen2019Directed}.
In this paper, we further refine this idea and provided explicit answers 
for problems in the bottom row of~\cref{tab:dictionary} in the cases of cycle networks.

\begin{table}[ht]
    \centering
    \begin{tabular}{ccc}
        \toprule
        Adjacency polytope & Kuramoto network & Tropical hypersurface\\ \toprule
        Regular subdivision & 
        Directed acyclic decomposition &
        Stable self-intersections\\ \midrule
        \makecell{Regular unimodular\\triangulation} & 
        \makecell{Directed acyclic decomposition\\into primitive subnetworks} &
        \makecell{Stable self-intersections\\with multiplicity one} 
        \\
        \bottomrule\\
    \end{tabular}
    \caption{
        The 3-way dictionary that translates equivalent concepts 
        among the three different points of view.
    }
    \label{tab:dictionary}
\end{table}

\section{Conclusions}\label{sec:conclusion}

Following the volume computation result in~\cite{ChenDavisMehta2018},
this paper aims to deepen the geometric understanding of adjacency polytopes
associated to a cycle Kuramoto network and use these geometric information to 
explore three aspects of Kuramoto equations:
\begin{enumerate}
    \item 
        To create an efficient polyhedral-like homotopy for
        solving Kuramoto equations;
    \item
        To explicitly describe direct acyclic decompositions
        of Kuramoto networks into \emph{primitive} subnetworks; and
    \item
        To understand the stable intersections of the tropical hypersurfaces
        defined by Kuramoto equations.
\end{enumerate}

First, we derived the explicit formula for a regular unimodular triangulation 
of the adjacency polytope $P_{C_N}$ associated to a cycle graph of $N$ nodes for any $N > 2$.
This greatly strengthens the results from~\cite{ChenDavisMehta2018} where
only the normalized volume of $P_{C_N}$ is known.

Then, using this regular unimodular triangulation, 
we develop a homotopy continuation algorithm based on the
well established polyhedral homotopy method yet has the distinct advantage
that it completely sidesteps the costly mixed volume/cells computation step.
This homotopy is also a significant improvement over the direct acyclic homotopy
proposed in~\cite{Chen2019Directed} since it deforms the Kuramoto system
into simplest possible subsystems each having a unique solution.
From the computational viewpoint, the proposed homotopy also offers important advantages
in numerical conditions, efficiency, and scalability as discussed in
\cref{rmk:numerical}.

The third contribution of this paper is a significantly refined version 
of the direct acyclic decomposition scheme originally proposed in~\cite{Chen2019Directed}.
The regular unimodular triangulation proposed here induces a decomposition
of a cycle Kuramoto network into the smallest possible components
known as primitive subnetworks.
Primitive subnetworks are of great value since they each have a unique 
complex synchronization configuration which can be computed easily and efficiently.
This is to be compared with the situation of the original decomposition scheme 
where the resulting subnetworks, in general, may not be primitive.

Finally, interpreted in the context of tropical geometry,
our result provides explicit formula for all stable intersections of
the tropical hypersurfaces defined by the unmixed form of the Kuramoto system
under a special choice of the valuation.
The induced tropical intersections are particularly nice
as we shown that every intersection point is of multiplicity 1.



 \bibliographystyle{siamplain}
 \bibliography{kuramoto,bkk,software}

\end{document}